\documentclass[12pt,a4paper,reqno]{amsart}
\usepackage{amssymb}
\usepackage{amscd}
\usepackage{enumerate}
\usepackage{graphicx}
\usepackage{siunitx}
\usepackage{tikz-cd}
\usepackage{color}
\usetikzlibrary{arrows}
\numberwithin{equation}{section}

\usepackage{mathabx}

\usepackage{mathtools}
\usepackage[tableposition=top]{caption}
\usepackage{booktabs,dcolumn}




\DeclareFontFamily{OT1}{rsfs}{}
\DeclareFontShape{OT1}{rsfs}{n}{it}{<-> rsfs10}{}
\DeclareMathAlphabet{\mathscr}{OT1}{rsfs}{n}{it}

\addtolength{\textwidth}{3 truecm}
\addtolength{\textheight}{1 truecm}
\setlength{\voffset}{-.6 truecm}
\setlength{\hoffset}{-1.3 truecm}
     
\theoremstyle{plain}

\newtheorem{theorem}{Theorem}[section]
\newtheorem{proposition}[theorem]{Proposition}
\newtheorem{lemma}[theorem]{Lemma}

\newtheorem{claim}[theorem]{Claim}

\theoremstyle{definition}

\newcommand\Z{\mathbb{Z}}
\newcommand\N{\mathbb{N}}

\newcommand\eps{\varepsilon}

\parindent 0mm
\parskip   5mm 


\usepackage{algorithm, algorithmic}

\begin{document}

\title[$L^1$ pointwise ergodic theorem fails for $F_2$]{Failure of the $L^1$ pointwise and maximal ergodic theorems for the free group}

\author{Terence Tao}
\address{UCLA Department of Mathematics, Los Angeles, CA 90095-1555.}
\email{tao@math.ucla.edu}


\subjclass[2010]{37A30}

\begin{abstract}  Let $F_2$ denote the free group on two generators $a,b$.  For any measure-preserving system $(X, {\mathcal X}, \mu, (T_g)_{g \in F_2})$ on a finite measure space $X = (X,{\mathcal X},\mu)$, any $f \in L^1(X)$, and any $n \geq 1$, define the averaging operators
$${\mathcal A}_n f(x) := \frac{1}{4 \times 3^{n-1}} \sum_{g \in F_2: |g| = n} f( T_g^{-1} x ),$$
where $|g|$ denotes the word length of $g$.
We give an example of a measure-preserving system $X$ and an $f \in L^1(X)$ such that the sequence ${\mathcal A}_n f(x)$ is unbounded in $n$ for almost every $x$, thus showing that the pointwise and maximal ergodic theorems do not hold in $L^1$ for actions of $F_2$.  This is despite the results of Nevo-Stein and Bufetov, who establish pointwise and maximal ergodic theorems in $L^p$ for $p>1$ and for $L \log L$ respectively, as well as an estimate of Naor and the author establishing a weak-type $(1,1)$ maximal inequality for the action on $\ell^1(F_2)$.  Our construction is a variant of a counterexample of Ornstein concerning iterates of a Markov operator.
\end{abstract}

\maketitle


\section{Introduction}

Let $F_2$ denote the free non-abelian group on two generators $a,b$.  Define a \emph{reduced word} to be a word with letters in the alphabet $\{a,b,a^{-1},b^{-1}\}$ in which $a,a^{-1}$ and $b,b^{-1}$ are never adjacent, and for each $g \in F_2$, define the \emph{word length} $|g|$ of $g$ to be the length of the unique reduced word that produces $g$.    We let $F_2^2$ denote the index $2$ subgroup of $F_2$ consisting of $g \in F_2$ with even word length.

Define a \emph{$F_2$-system} to be a quadruple $(X, {\mathcal X}, \mu, (T_g)_{g \in F_2})$, where $(X,{\mathcal X}, \mu)$ is a measure space with $0 < \mu(X) < \infty$, and $T_g \colon X \to X$ is a family of measure-preserving maps on $X$ for $g \in F_2$, with $T_1$ the identity and $T_g T_h = T_{gh}$ for all $g,h \in F_2$; in particular, the $T_g$ are bi-measurable with $T_g^{-1} = T_{g^{-1}}$.  One can of course normalise such systems to have total measure $1$ by dividing $\mu$ by $\mu(X)$, but (as we will eventually be gluing several systems together) it will be convenient not to always insist on such a normalisation.  As the free group $F_2$ has no relations, such a system can be prescribed by specifying two arbitrary invertible bi-measurable measure-preserving maps $T_a,T_b:X \to X$, and then defining $T_g$ for all other $g \in G$ in the obvious fashion.

We say that an $F_2$-system is \emph{$F_2$-ergodic} if all $F_2$-invariant measurable sets either have zero measure or full measure, and \emph{$F_2^2$-ergodic} if the same claim is true for $F_2^2$-invariant measurable sets. For any $f \in L^1(X) = L^1(X,{\mathcal X},\mu)$ and any $n \geq 1$, we define the averaging operators
$${\mathcal A}_n f(x) := \frac{1}{4 \times 3^{n-1}} \sum_{g \in F_2: |g| = n} f( T_g^{-1} x );$$
note that $4 \times 3^{n-1}$ is the number of reduced words of length $n$.  One can of course use symmetry to replace $T_g^{-1}$ by $T_g$ if desired.

The pointwise convergence of the operators ${\mathcal A}_n$ was studied by Nevo and Stein \cite{nevo} and Bufetov \cite{bufetov}, who (among other things) proved the following result:

\begin{theorem}[Pointwise ergodic theorem]\label{pet}  Let $(X,{\mathcal X},\mu,(T_g)_{g \in F_2})$ be an $F_2$-system.  If $\int_X |f| \log(2+|f|)\ d\mu < \infty$, then ${\mathcal A}_{2n} f$ converges pointwise almost everywhere (and in $L^1(X)$ norm) to an $F_2^2$-invariant function.  In particular, if $(X,{\mathcal X},\mu,(T_g)_{g \in F_2})$ is $F_2^2$-ergodic, then ${\mathcal A}_{2n} f$ converges pointwise almost everywhere and in $L^1$ to the constant $\frac{1}{\mu(X)} \int_X f\ d\mu$.
\end{theorem}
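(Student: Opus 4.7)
The plan is to follow the classical two-step paradigm of pointwise ergodic theory: establish a maximal inequality for the family $\{\mathcal{A}_{2n}\}_{n\geq 1}$ at the $L\log L$ endpoint, and separately verify pointwise convergence on a dense subclass of $L\log L$, then combine the two via the Banach principle. The bipartite structure of the Cayley graph of $F_2$---elements of length $2n$ all lie in the index-$2$ subgroup $F_2^2$---is what forces the restriction to even $n$ and produces a limit that is merely $F_2^2$-invariant rather than $F_2$-invariant.

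For the dense subclass I would take $f \in L^\infty \cap L^2$ and decompose orthogonally in $L^2$ as $f = f_{\mathrm{inv}} + f^{\perp}$, where $f_{\mathrm{inv}}$ is the projection onto the closed subspace of $F_2^2$-invariants. On the invariant part $\mathcal{A}_{2n} f_{\mathrm{inv}} = f_{\mathrm{inv}}$ trivially. For $f^{\perp}$, the sphere-convolution recursion
\[
\sigma_1 \ast \sigma_n \;=\; \tfrac14\,\sigma_{n-1} + \tfrac34\,\sigma_{n+1} \qquad (n \geq 1),
\]
(where $\sigma_n$ is the uniform probability measure on $\{g\in F_2 : |g|=n\}$) expresses each $\mathcal{A}_{2n}$ as an explicit Chebyshev-type polynomial in the symmetric Markov operator $M := \tfrac14(T_a + T_{a^{-1}} + T_b + T_{b^{-1}})$. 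By Kesten's theorem, the spectrum of $M$ on the complement of the $F_2$-invariants lies in $[-\tfrac{\sqrt3}{2},\tfrac{\sqrt3}{2}]$, yielding $\|\mathcal{A}_{2n} f^{\perp}\|_{L^2} \to 0$ via functional calculus, and hence a.e.\ convergence along a subsequence for the dense subclass $L^\infty \cap L^2$.

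The main obstacle is the maximal inequality: the present paper shows that the weak $(1,1)$ analogue fails, so $L\log L$ is genuinely critical. My approach, following Bufetov, uses the same polynomial expansion of $\mathcal{A}_{2n}$ in terms of $M^k$ together with Stein's maximal theorem for iterates of a self-adjoint Markov contraction on a probability space, which provides precisely an $L\log L$-to-weak-$L^1$ maximal inequality for the iterates $M^k$. The technical hurdle is that the coefficients of $\mathcal{A}_{2n}$ as a polynomial in $M$ are not all non-negative, so one cannot simply dominate the sphere average by a positive combination of $M^k$; instead one groups consecutive differences of the form $M^{k+2}-M^{k}$ and uses self-adjointness of $M$ to control their oscillation. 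Given both ingredients, the Banach principle furnishes a.e.\ convergence for every $f\in L\log L$; the $F_2^2$-invariance of the limit follows from $L^1$ convergence together with the sphere recursion (which asymptotically absorbs any left-translate $T_h$ with $h \in F_2^2$ into a convex combination of nearby $\mathcal{A}_{2m}$), and $L^1$-norm convergence then follows from dominated convergence against the integrable maximal function.
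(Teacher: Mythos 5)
The paper does not actually prove Theorem \ref{pet}; it cites the $L^p$ ($p>1$) case to Nevo--Stein \cite{nevo} and the $L\log L$ case to Bufetov \cite{bufetov}. Your proposal attempts a genuine proof, so let me point out where it goes wrong.

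First, the invocation of Kesten's theorem is incorrect. Kesten's theorem locates the spectrum of $M = \tfrac14(T_a+T_{a^{-1}}+T_b+T_{b^{-1}})$ inside $[-\tfrac{\sqrt3}{2},\tfrac{\sqrt3}{2}]$ only for the \emph{regular} representation on $\ell^2(F_2)$. For a general $F_2$-system this bound fails: the Koopman operator need not be weakly contained in the regular representation. For instance, push an $F_2$-action through the abelianization $F_2\to\Z^2$ to an ergodic two-dimensional torus rotation; on the character $\chi_{m,n}$ the operator $M$ acts by $\tfrac12(\cos 2\pi m\alpha+\cos 2\pi n\beta)$, which accumulates at $1$. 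The correct statement is that the $\pm1$ eigenspaces of $M$ together are precisely the $F_2^2$-invariant functions (if $Mf=f$ then equality in $\|\tfrac14\sum_s T_sf\|_2\le\|f\|_2$ forces $T_sf=f$ for all $s$, and analogously for $Mf=-f$; conversely $M^2=\mathrm{Id}$ on $F_2^2$-invariants), and on the orthogonal complement the spectrum lies in the \emph{open} interval $(-1,1)$. The $L^2$-convergence $\mathcal{A}_{2n}f^\perp\to 0$ then follows because the polynomials $p_{2n}$ satisfying $\mathcal{A}_{2n}=p_{2n}(M)$ obey $|p_{2n}|\le 1$ on $[-1,1]$ and $p_{2n}(\lambda)\to 0$ for every $\lambda\in(-1,1)$, so dominated convergence in the spectral measure applies --- no spectral gap is needed, and none is available.

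Second, and more seriously, the maximal inequality step does not close. Stein's maximal ergodic theorem for self-adjoint Markov operators is an $L^p$ result for $p>1$; it does not furnish the $L\log L \to$ weak-$L^1$ bound you assert, which is precisely why the $L\log L$ case required Bufetov's separate argument. Moreover, even granting some $L\log L$ maximal control on $\sup_k |M^{2k}f|$ (which one can extract from Rota's theorem for $M^{2k}$), the passage to $\sup_n |p_{2n}(M)f|$ through a polynomial expansion with sign-indefinite coefficients whose $\ell^1$ norms are unbounded is not justified by ``grouping consecutive differences $M^{k+2}-M^k$''; at the $L\log L$ endpoint there is no interpolation slack to absorb such oscillation, and you have not given an actual estimate. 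Bufetov's proof avoids expressing $\mathcal{A}_{2n}$ as a polynomial in $M$ entirely. He lifts $X$ to the four-fold cover $\tilde X$ exactly as in Section 2 of this paper, uses the identity $\mathcal{A}_n=\pi_*P^n\pi^*$ for the non-backtracking Markov operator $P$, and then applies Rota's ``Alternierende Verfahren'' to $P$, exploiting the specific adjointness structure that $P$ enjoys on $\tilde X$. This is not the same mechanism as the Stein $L^p$ argument, and your sketch conflates the two.
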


The restriction to even averages ${\mathcal A}_{2n}$, and the use of $F_2^2$ instead of $F_2$, can be seen to be necessary by considering the simple example in which $X$ is a two-element set $\{0,1\}$ (with uniform measure) and $T_a,T_b$ interchange the two elements $0,1$ of this set.  The original paper of Nevo and Stein \cite{nevo} established this theorem for $f \in L^p(X)$ for some $p>1$, by modifying the methods of Stein \cite{stein}.  The subsequent paper of Bufetov \cite{bufetov} used instead the ``Alternierende Verfahren'' of Rota \cite{rota} to cover the $L \log L$ case.  Both arguments also extend to several other group actions (see e.g. \cite{nevo-2}, \cite{fuji}, \cite{marg}), but for simplicity of exposition we shall focus only on the $F_2$ case.  We also remark that both arguments also give bounds on the associated maximal operator $f \mapsto \sup_n {\mathcal A}_n |f|$.  See also \cite{bowen-1}, \cite{bowen-2} for an alternate approach to pointwise ergodic theorems in $L^p$ and $L \log L$.

In \cite{nevo} the question was posed as to whether the above pointwise ergodic theorem extended to arbitrary $L^1(X)$ functions.  The main result of this paper answers this question in the negative:

\begin{theorem}[Counterexample]\label{main}  There exists an $F_2$-system $(X,{\mathcal X},\mu,(T_g)_{g \in F_2})$ and an $f \in L^1(X)$ such that $\sup_n |{\mathcal A}_{2n} f(x)| = \infty$ for almost every $x \in X$.  In particular, ${\mathcal A}_{2n} f(x)$ fails to converge to a limit as $n \to \infty$ for almost every $x \in X$.
\end{theorem}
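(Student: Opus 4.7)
Following the abstract's hint, I would adapt Ornstein's classical a.e.\ divergence construction for iterates of a Markov operator to the $F_2$-averaging setting, proceeding in two stages.

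\emph{Stage 1: reduction to a finite quantitative statement.} The plan is to show that it suffices to produce, for every $K \ge 1$ and $\eta > 0$, a finite probability $F_2$-system $(X, \mathcal{X}, \mu, (T_g))$ and a nonnegative $f \in L^1(X)$ with $\|f\|_1 \le 1$ satisfying
\[
\mu\bigl(\{x \in X : \sup_n \mathcal{A}_{2n}f(x) \ge K\}\bigr) \ge 1-\eta.
\]
Given such finite examples $(X_j, f_j)$ indexed by $K_j \to \infty$ and $\eta_j = 2^{-j}$, I would amalgamate them as the probability product space $X := \prod_j X_j$ equipped with the diagonal $F_2$-action $T_g(x_1,x_2,\dots) = (T_g^{(1)}x_1, T_g^{(2)}x_2,\dots)$, and set $f := \sum_j c_j f_j^{\flat}$, where $f_j^\flat$ denotes the lift of $f_j$ to $X$ and the constants satisfy $\sum_j c_j < \infty$ and $c_j K_j \to \infty$ (e.g.\ $c_j = 2^{-j}$, $K_j = 4^j$). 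Since $\mathcal{A}_{2n}$ decomposes coordinate-wise and all summands are nonnegative, $\sup_n \mathcal{A}_{2n}f(x) \ge c_j \sup_n \mathcal{A}_{2n}^{(j)} f_j(x_j)$ for every $j$, so the first Borel--Cantelli lemma, applied to the independent ``high-gain'' events $\{x_j : \sup_n \mathcal{A}_{2n}^{(j)}f_j(x_j) \ge K_j\}$, yields $\sup_n \mathcal{A}_{2n}f = \infty$ almost everywhere on $X$.

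\emph{Stage 2: the finite quantitative construction.} The substantive work is in building $(X, f)$. Following Ornstein, I would seek a finite $F_2$-set---most naturally a quotient $F_2 / H$ by a finite-index subgroup, on which $T_a, T_b$ act as measure-preserving permutations of the uniform measure---equipped with a collection of ``bottleneck'' subsets $L_1,\dots,L_N$ whose union covers a $(1-\eta)$-fraction of $X$, engineered so that for each $n \le N$ the radius-$2n$ sphere of the support of a carefully chosen point-mass-like $f$ collapses, under the identifications coming from $H$, onto $L_n$, producing $\mathcal{A}_{2n}f \ge K$ on $L_n$. The parameters $(N, |X|, H, f)$ would be tuned to the target $(K, \eta)$ in the spirit of Ornstein's backwards Rokhlin towers.

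\emph{Main obstacle.} The decisive difficulty is the rigidity of $F_2$-actions: Ornstein is free to design an arbitrary symmetric Markov kernel, while here $\mathcal{A}_1 = \tfrac14(T_a+T_a^{-1}+T_b+T_b^{-1})$ must come from two honest permutations, and the non-backtracking spherical operators $\mathcal{A}_{2n}$ are coupled through a Cartier--Kesten-type recursion that introduces backtracking corrections into any attempt to realise a prescribed operator as $\mathcal{A}_{2n}$. Arranging the combinatorics so that the concentration property $\mathcal{A}_{2n}f \ge K$ on $L_n$ holds for \emph{all} $n \le N$ simultaneously, despite these constraints and the requirement of consistency with a single pair of permutations, is where I expect the main technical effort of the proof to lie.
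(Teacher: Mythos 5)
Your Stage~1 reduction is essentially the paper's own: the paper reduces to a quantitative statement (Theorem~\ref{main-2}: for every $\alpha,\eps>0$ there exist an $F_2$-system and a nonnegative $f$ with $\|f\|_{L^1}\leq \alpha\mu(X)$ and $\sup_n{\mathcal A}_{2n}f\geq 1-\eps$ off a set of measure $\eps\mu(X)$), then forms the countable product with $\alpha_m=\eps_m=2^{-m}$, sets $f=\sum_m m\tilde f_m$, and concludes via Borel--Cantelli. After rescaling $f\mapsto f/\alpha$, your $(K,\eta)$ formulation is equivalent, and your choice $c_j=2^{-j}$, $K_j=4^j$ serves the same purpose as the paper's weights $m$. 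So Stage~1 is fine.

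Stage~2 is where the substance lies, and your outline is not yet a proof; it also departs from the paper's route at three points where the paper has ideas you do not invoke. First, the ``main obstacle'' you flag (the Cartier--Kesten/non-backtracking recursion) is dissolved by a device you never mention: pass to the four-fold cover $\tilde X=X\times\{a,b,a^{-1},b^{-1}\}$ recording the outward normal, and define the Markov operator $P\tilde f(x,s)=\frac13\sum_{s'\neq s^{-1}}\tilde f(T_s^{-1}x,s')$; then ${\mathcal A}_n f=\pi_* P^n\pi^* f$ exactly. This (due to Bufetov) is what turns the spherical averages into genuine iterates of a self-adjoint Markov operator, putting one squarely in Ornstein's setting. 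Without it, your bottleneck scheme has to fight the backtracking corrections directly, and you rightly note you do not know how to win that fight. Second, your proposed state space, a finite quotient $F_2/H$, is not what the paper uses and in fact cannot serve as a ``good system'' in the paper's sense: Axiom~(i) demands subsets of $X_b$ of \emph{every} intermediate measure (used to choose the coupling set $E$ of measure exactly $\kappa$), and Axiom~(iii) is arranged by taking $T_a$ on $X_a$ to be an irrational circle rotation; both require an atomless space. The paper instead builds $X$ from the disjoint union of spaces of half-infinite reduced words $\biguplus_{n\geq 0}Y_n$ (an ``infinitely large ball'' in $F_2$), identifies the boundary sphere $Y_0$ with itself by a reflection, and redefines $T_a,T_b$ on the two halves of that boundary. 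The infinite tree-like interior $X_0$ is what supplies arbitrarily long ``time delays'': a point mass started deep inside $X_0$ spreads non-backtrackingly for a long time before hitting the boundary. Your finite $F_2/H$ has no analogous reservoir. Third, and most importantly, you have omitted the amplification mechanism that is the heart of Ornstein: the paper encodes the function as an \emph{ancient} Markov chain $(\tilde f_n)_{n\in\Z}$, proves an induction $P(\alpha)\Rightarrow P(\alpha(1-\tfrac{\alpha}{4}))$, and at each step glues two copies of a good system by a tiny coupling set $E\subset X_b$ of measure $\kappa$, delays the second copy by a large time $2M$, and uses the positive pointwise ergodic theorem (Theorem~\ref{pet}, via Lemma~\ref{pet-good}) to show that by time $2M$ the first copy's mass $\alpha/2$ has mixed almost uniformly over both copies; this ambient level $\alpha/2$ is exactly what permits the second copy's amplitude to be shaved by a factor $1-\tfrac{\alpha}{2}$, giving the $L^1$-mass savings that drives the induction to $\alpha\to 0$. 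Your ``bottleneck'' description says nothing about where the $L^1$ savings comes from, and the parameter accounting $\sum_n\mu(L_n)\geq 1-\eta$ together with $\mu(L_n)\lesssim 1/K$ already forces $N\gtrsim K$; organizing this without the delay-plus-mixing trick is precisely the hard part you have left open. In short: the reduction is right, but Stage~2 as written is a plan to rediscover the paper's construction rather than a proof, and the two key ingredients you would need --- the four-fold cover and the Ornstein gluing with ancient chains and mixing --- are absent.
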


As such, there is no pointwise ergodic theorem or maximal ergodic theorem in $L^1$ for actions of the free group $F_2$.  Our construction also applies to free groups $F_r$ on $r$ generators for any $r \geq 2$; we leave the modification of the arguments below to this more general case to the interested reader.  This result stands in contrast to the situation for the regular action of $F_2$ on $\ell^1(F_2)$, for which a weak-type (1,1) for the maximal operator was established by Naor and the author \cite[Theorem 1.5]{naor}.  Note that the estimate for $\ell^1(F_2)$ does not transfer to arbitrary $F_2$-systems due to the non-amenability of the free group $F_2$.

Because the sphere $\{ g \in F_2: |g| = n \}$ is a positive fraction of the ball $\{ g \in F_2: |g| \leq n \}$, the above result also holds if the average over spheres is replaced with an average over balls, or with regards to other minor variations of the spherical averaging operator such as $\frac{1}{2} {\mathcal A}_n + \frac{1}{2} {\mathcal A}_{n+1}$.  This negative result for averaging on balls stands in contrast with the situation for amenable groups, for which pointwise and maximal ergodic results in $L^1$ are established for suitable replacements of balls, such as tempered F{\o}lner sets; see \cite{lindenstrauss}.  On the other hand, if one considers the Ces\'aro means $\frac{1}{N} \sum_{n \leq N} {\mathcal A}_n$ of spherical averages on $F_2$-systems, then pointwise and maximal ergodic theorems in $L^1$ were established in \cite{nevo}.

Our construction is inspired by a well-known counterexample of Ornstein \cite{ornstein} demonstrating the failure of the maximal ergodic theorem in $L^1$ for iterates $P^n$ of a certain well-chosen self-adjoint Markov operator.  Roughly speaking, the function $f$ in Ornstein's example consists of many components $f_i$, each of which comes with a certain ``time delay'' that ensures that the dynamics of $P^n f_i$ only become significant after a significant period of time - in particular, long enough for the dynamics of other components of the function to have achieved ``mixing'' in the portion of $X$ where the most interesting portion of the dynamics of $P^n f_i$ takes place, allowing the amplitude of $f_i$ to be slightly smaller than would otherwise have been necessary to make $\sup_n P^n f$ large.  To adapt this construction to the setting of $F_2$-systems, we need to glue together various $F_2$-systems that have the capability to produce such a ``time delay''.  We will be able to construct such systems by basically taking an ``infinitely large ball'' in $F_2$, gluing the boundary of that ball to itself, and redefining the shift maps on the boundary appropriately.  Somewhat ironically, the positive results in Theorem \ref{pet} play a helpful supporting role in establishing the negative result in Theorem \ref{main}, by establishing the ``mixing'' referred to previously that is an essential part of Ornstein's construction.

\subsection{Acknowledgments}  The author is supported by NSF grant DMS-1266164 and by a Simons Investigator Award, and thanks Lewis Bowen for helpful discussions and corrections.

\section{Initial reductions}

We begin by reducing Theorem \ref{main} to the following more quantitative statement.

\begin{theorem}[Quantitative counterexample]\label{main-2}  Let $\alpha, \eps > 0$.  Then there exists an $F_2$-system $(X,{\mathcal X},\mu,(T_g)_{g \in F_2})$ and a non-negative function $f \in L^\infty(X)$, such that
$$ \| f \|_{L^1(X)} \leq \alpha \mu(X)$$
but such that
$$ \sup_n {\mathcal A}_{2n} f(x) \geq 1-\eps$$
for all $x \in X$ outside of a set of measure at most $\eps \mu(X)$.
\end{theorem}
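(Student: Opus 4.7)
The strategy, as the introduction explicitly advertises, is to adapt Ornstein's counterexample \cite{ornstein}. I would write $f = \sum_{i=1}^{k} f_i$ as a sum of many ``time-delayed'' pulses $f_i = \one_{A_{N_i}}$ living at geometrically separated scales $N_1 \ll N_2 \ll \cdots \ll N_k$. Each pulse $f_i$ has tiny $L^1$ density, but is engineered so that ${\mathcal A}_{2N_i} f_i$ is bounded below by a definite constant $c > 0$ on a large ``target'' region $T_i \subset X$; it is essentially zero for $n$ much less than $N_i$ (the delay) and close to its $L^1$ density for $n$ much greater than $N_i$ (mixing, provided by Theorem~\ref{pet}). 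For $x \in T_i$,
\begin{equation*}
{\mathcal A}_{2N_i} f(x) \;\geq\; c \;+\; \sum_{j<i} \frac{\|f_j\|_{L^1(X)}}{\mu(X)} \;-\; o(1),
\end{equation*}
so for $i$ close to $k$ this exceeds $1-\eps$, while choosing the $L^1$ densities of the $f_i$ to sum to at most $\alpha$ gives $\|f\|_1 \leq \alpha \mu(X)$. The telescoping of the mixed contributions is what buys Ornstein's saving: without it, the firing of a single pulse would need to achieve amplitude close to $1$ entirely on its own, forcing its $L^1$ norm to be large.

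The heart of the construction is the single-scale time-delay block: for each large $N$, an $F_2$-system $Y_N$ with a small seed $A_N \subset Y_N$ and a large target $T_N \subset Y_N$ such that ${\mathcal A}_{2n} \one_{A_N}(x)$ is negligible for $x \in T_N$ and $n < N$ but bounded below by a definite constant for $n = N$. Following the author's hint, I would construct $Y_N$ by starting with the ball $B_N$ in the Cayley graph of $F_2$ and gluing its boundary sphere $S_N$ back into itself. Each generator $T_a$ is defined as ordinary left multiplication on the interior of $B_N$; on the subset of $S_N$ whose left-multiplication image exits the ball (three quarters of $S_N$, matched in cardinality with the three quarters of the image that is missing), $T_a$ is extended by an arbitrary bijection back into $S_N$. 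Because $F_2$ is free, any such choice of bijections for $T_a$ and $T_b$ automatically yields a genuine $F_2$-action on $B_N$. Placing $A_N$ near the identity coset and $T_N$ near the boundary, the delay property follows because the shortest paths inside $B_N$ from $T_N$ to $A_N$ have length at least of order $N$, while the firing property reduces to a path-counting estimate on the glued ball: enough of the $\sim 3^{2N}$-many length-$2N$ paths from $T_N$ must be routed back through $A_N$ by the boundary bijection.

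Given such blocks and quantitative mixing from Theorem~\ref{pet}, the assembly is a combinatorial exercise: take $X$ to be a suitable combination of the $Y_{N_i}$ (disjoint union or product, weighted so each $T_i$ has positive measure in $X$), choose the scale gaps $N_{i+1}/N_i$ large enough that mixing at scale $N_i$ is complete before firing at scale $N_{i+1}$, and distribute the densities so that $\sum_i \|f_i\|_1 \leq \alpha \mu(X)$ while the telescoped lower bound exceeds $1-\eps$ on, say, the top half of scales. The main obstacle is the firing bound on $Y_N$: producing a definite constant lower bound for ${\mathcal A}_{2N} \one_{A_N}(x)$ on $T_N$ requires either a clever explicit choice of boundary bijection (perhaps a generic or random permutation with good mixing properties) or enlarging $A_N$ to a thickened seed so that the path counts work out honestly. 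A subsidiary issue is upgrading the qualitative conclusion of Theorem~\ref{pet} to an effective $L^1$-mixing rate on each finite system $Y_N$, needed to pin down the scale gaps $N_{i+1}/N_i$; this is harmless since $Y_N$ is finite but must be stated explicitly.
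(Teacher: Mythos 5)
Your high-level strategy — adapting Ornstein's counterexample by creating time-delayed pulses that fire after earlier pulses have mixed, and constructing the delay mechanism by gluing the boundary of a ball in the Cayley graph back into itself — matches the paper's in spirit. But there are two genuine gaps that make the plan as stated unworkable, and identifying how the paper avoids them is instructive. First, the ``firing bound'' that you flag as your main obstacle (that ${\mathcal A}_{2N}\one_{A_N}$ should exceed a definite constant $c$ on a large target $T_N$) is not a technicality to be finessed by choosing the boundary bijection cleverly; it is an arithmetic impossibility under your constraints. If $f_i$ achieves density $\geq c$ at some time on a set $T_i$, then $\|f_i\|_{L^1}\geq c\,\mu(T_i)$, and since the $T_i$ must together cover nearly all of $X$, the sum $\sum_i\|f_i\|_{L^1}$ is forced to be at least roughly $c(1-\eps)\mu(X)$; but you also want this sum $\leq \alpha\mu(X)$ with $\alpha$ small and $c\approx 1-\alpha$, which is contradictory. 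The paper sidesteps this entirely: it starts the construction with total relative mass $\alpha=1$, so that the ergodic theorem (Lemma~\ref{pet-good}) gives pointwise convergence of the Markov chain to the constant $1$ and thus produces the ``firing'' for free, with no path-counting required; the reduction of $\alpha$ is then achieved by an \emph{iteration} that doubles the measure space at each step so the normalized density shrinks while amplitudes stay near $1$.

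Second, your proposed assembly (``disjoint union or product, weighted'') cannot furnish Ornstein's telescoping. In a disjoint union the dynamics on different blocks never interact, so the maximal function at a point of $Y_{N_i}$ sees only $f_i$ and not the mixed remnants of earlier pulses; the $\sum_{j<i}\|f_j\|_1/\mu(X)$ term in your lower bound simply is not there. The paper instead glues two copies of the system together by rewiring the $T_b$-action on a tiny set $E\subset X_b$ of measure $\kappa$ (the ``coupling''), and then verifies that ergodicity survives the gluing and that for small $\kappa$ the dynamics on each half remains close to the un-glued dynamics for $O(N)$ steps while still allowing complete mixing across both halves in the long run. A further structural difference: the paper does not use the finite ball $B_N$ but an ``infinitely large ball'' (the space of half-infinite reduced words $\biguplus_{n\geq 0}Y_n$ with the boundary $Y_0$ quotiented by a reflection), precisely so that it can speak of an \emph{ancient} Markov chain $(\tilde f_n)_{n\in\Z}$ extending to all negative times; this is what makes the delay clean and eliminates the need for an effective mixing rate (Egorov's theorem suffices). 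These are not cosmetic differences — the $\alpha=1$ start plus the iterative doubling with $\kappa$-coupling is the mechanism that makes the $L^1$ budget close, and your plan is missing both pieces.
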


Let us see how Theorem \ref{main-2} implies Theorem \ref{main}.  By dividing $\mu$ by $\mu(X)$ we may normalise $\mu(X)=1$ in Theorem \ref{main-2}.  Applying the above theorem with $\alpha=\eps=2^{-m}$, we can thus find for each natural number $m$, an $F_2$-system $(X_m, {\mathcal X}_m, \mu_m, (T_{g,m})_{g \in F_2})$ with $\mu_m(X_m)=1$, and a non-negative function $f_m \in L^\infty(X_m)$ such that
$$ \| f_m \|_{L^1(X_m)} \leq 2^{-m}$$
and
$$ \sup_n {\mathcal A}_{2n} f_m(x) \geq 1 - 2^{-m} $$
outside of a set of measure $2^{-m}$.

Let $(X, {\mathcal X}, \mu, (T_g)_{g \in F_2})$ be the product system, thus $X$ is the Cartesian product $X := \prod_m X_m$ with product $\sigma$-algebra ${\mathcal X} := \prod_m {\mathcal X}_m$, product probability measure $\mu := \prod_m \mu_m$, and product action $T_g := \biguplus_m T_{g,m}$.  Each $f_m \in L^\infty(X_m)$ then lifts to a function $\tilde f_m \in L^\infty(X)$ with
$$ \| \tilde f_m \|_{L^1(X)} \leq 2^{-m}$$
and
$$ \sup_n {\mathcal A}_{2n} \tilde f_m(x) \geq 1 - 2^{-m} \geq 1/2$$
outside of a set of measure $2^{-m}$.  If we then set $f := \sum_m m \tilde f_m$, then $f \in L^1(X)$, and from the pointwise inequality
$$ \sup_n {\mathcal A}_{2n} f(x) \geq m_0 \sup_n {\mathcal A}_{2n} \tilde f_m(x)$$
for all $m \geq m_0$ and the Borel-Cantelli lemma we see that $\sup_n {\mathcal A}_{2n} f(x)$ is larger than $m_0/2$ for almost every $x$ and any given $m_0$, which yields the claim.

It remains to prove Theorem \ref{main-2}.  In order to adapt the arguments of Ornstein \cite{ornstein}, we would like to interpret the averaging operators ${\mathcal A}_n$ as powers $P^n$ of a Markov operator $P$.  This is not true as stated, since we do not quite have the semigroup property ${\mathcal A}_n {\mathcal A}_m = {\mathcal A}_{n+m}$ (although ${\mathcal A}_n {\mathcal A}_m$ does contain a term of the form $\frac{3}{4} {\mathcal A}_{n+m}$).  However, as observed by Bufetov \cite{bufetov}, we can recover a Markov interpretation for ${\mathcal A}_n$ by lifting $X$ up to a four-fold cover $\tilde X$ that tracks the ``outward normal vector'' for the sphere.  More precisely, given an $F_2$-system $(X,{\mathcal X}, \mu, (T_g)_{g \in F_2})$, we define the lifted measure space $(\tilde X, \tilde {\mathcal X}, \tilde \mu)$ to be the product of $(X,{\mathcal X},\mu)$ and the four-element space $\{a,b,a^{-1},b^{-1}\}$ with the uniform probability measure; in particular $\tilde \mu(\tilde X) = \mu(X)$.  Let $\pi \colon \tilde X \to X$ be the projection operator $\pi(x,s) := x$; this induces a pushforward operator $\pi_* \colon L^1(\tilde X) \to L^1(X)$ and a pullback operator $\pi^* \colon L^1(X) \to L^1(\tilde X)$ by the formulae
$$ \pi_* \tilde f(x) := \frac{1}{4} \sum_{s \in \{a,b,a^{-1},b^{-1}\}} \tilde f(x,s)$$
and
$$ \pi^* f(x,s) := f(x)$$
for $f \in L^1(X)$ and $\tilde f \in L^1(\tilde X)$.  We also define the Markov operator $P \colon L^1(\tilde X) \to L^1(\tilde X)$ by
$$ P \tilde f(x,s) := \frac{1}{3} \sum_{s' \in \{a,b,a^{-1},b^{-1}\}: s' \neq s^{-1}} \tilde f( T_s^{-1} x, s' ).$$
One can view $P$ as the Markov operator associated to the Markov chain that for each unit time, moves a given point $(x,s)$ of $\tilde X$ to one of the three points $(T_{s'} x, s')$ with $s' \in \{a,b,a^{-1},b^{-1}\} \backslash \{s^{-1}\}$, chosen at random.  By writing the elements of $\{g \in F_2: |g|=n\}$ as reduced words of length $n$, one can easily verify the identity
$$ {\mathcal A}_n f = \pi_* P^n \pi^* f $$
for any $f \in L^1(X)$ and $n \geq 1$.  It thus suffices to show

\begin{theorem}[Quantitative counterexample, again]\label{main-3}  Let $\alpha, \eps > 0$.  Then there exists an $F_2$-system $(X,{\mathcal X},\mu,(T_g)_{g \in F_2})$ and a non-negative function $\tilde f \in L^\infty(\tilde X)$, such that
$$ \| \tilde f \|_{L^1(\tilde X)} \leq \alpha \mu(X)$$
but such that
$$ \sup_n \pi_* P^{2n} \tilde f(x) \geq 1-\eps$$
for all $x \in X$ outside of a set of measure at most $\eps \mu(X)$.
\end{theorem}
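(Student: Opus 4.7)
Plan. The strategy is to adapt Ornstein's classical construction of a counterexample to the $L^1$ maximal ergodic theorem for iterated Markov operators. The plan is to build $X$ by combining a family of ``building blocks'' $Y_{N_1}, \ldots, Y_{N_M}$ indexed by rapidly increasing scales $N_1 \ll N_2 \ll \cdots \ll N_M$, and to take $\tilde f$ to be a superposition $\sum_{i=1}^M c_i g_{N_i}$ of seed functions localized in the respective blocks. The weights $c_i$ will be chosen to enforce $\|\tilde f\|_{L^1(\tilde X)} \leq \alpha \mu(X)$, while the dynamics of each block will be arranged so that there is a time $n_i \approx N_i$ at which $\pi_* P^{2n_i} \tilde f(x) \geq 1 - \eps$ on a region $E_i \subset X$ of measure $\gtrsim \mu(X)/M$. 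For $M$ of order $1/\eps$, the union $\bigcup_i E_i$ will then cover all but an $\eps$-fraction of $X$.

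Step 1 (building blocks). For each scale $N$, I would construct a finite $F_2$-system $Y_N$ by taking the ball $B_N = \{g \in F_2 : |g| \le N\}$ in the Cayley graph of $F_2$ and gluing its boundary sphere to itself in an $F_2$-equivariant manner: define $T_a, T_b$ on the interior via the natural left action of $F_2$ on itself, and extend them to bijections of the finite quotient $Y_N$ by a suitable involutive pairing on the boundary points. Freeness of $F_2$ ensures that any such pair of bijections yields an honest $F_2$-system. The gluing will be designed so that (a) $Y_N$ is $F_2^2$-ergodic, so that Theorem~\ref{pet} guarantees that $\pi_* P^{2n} g_N$ converges uniformly to the constant $\|g_N\|_{L^1}/\mu(Y_N)$ at a controllable rate (the ``mixing''); and (b) a suitable seed function $g_N$, localized in a small portion of $\tilde Y_N$, exhibits a ``time-delayed peak''---the non-backtracking Markov walk $P$ carries the mass of $g_N$ approximately ballistically across $Y_N$ for time $n_N \sim N$, producing $\pi_* P^{2n_N} g_N \approx 1$ on a set of measure $\gtrsim \mu(Y_N)$, before dispersal and mixing set in for $n \gg N$.

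Step 2 (combination). I would then glue the blocks $Y_{N_i}$ into a single $F_2$-system $X$ (e.g.\ as a disjoint union with suitably normalized measures), set $\tilde f := \sum_i c_i g_{N_i}$, and choose the scales $N_i$ inductively so that $N_{i+1}$ exceeds the mixing time of all prior blocks. At the peak time $n_{N_i}$, the contribution of each earlier seed $g_{N_j}$ ($j<i$) will by Step 1(a) have decayed to a spatial average of size at most $O(\eps/M)$; contributions of later seeds $g_{N_j}$ ($j>i$) can be made $O(\eps/M)$ by choosing $c_j$ small enough. Combined with the $i$-th peak this yields $\pi_* P^{2n_{N_i}} \tilde f(x) \ge c_i(1-\eps) - O(\eps)$ for $x \in E_i$; after rescaling to absorb constants, the required lower bound $1-\eps$ follows.

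Main obstacle. The hardest step will be designing the boundary gluing in Step 1 so that the seed $g_N$ genuinely peaks at height close to $1$ on a large region, rather than being smeared out to a sub-unit value. This will require a delicate analysis of the non-backtracking random walk on the ball-with-gluing, together with a careful choice of the ``source region'' so that the spherical averages $\mathcal{A}_{2n}$ concentrate onto it at the critical time $n_N$. Verifying $F_2^2$-ergodicity of $Y_N$ and quantifying the mixing rate quantitatively enough to drive the scale selection in Step 2 are the remaining technical points that must be dispatched to close the argument.
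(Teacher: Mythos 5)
Your high-level plan (rapidly separated time scales, mixing via Theorem~\ref{pet}, superposition of delayed seeds) is in the right spirit and correctly identifies the ingredients, but the arithmetic as you have set it up cannot produce $\alpha<1$, and the gap is precisely the missing piece of Ornstein's idea.

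Here is the problem. Each $P^{2n}$ and $\pi_*$ preserve the $L^1$ mass of a non-negative function, so if $\pi_* P^{2n_i}\bigl(c_i g_{N_i}\bigr)\geq 1-\eps$ on a set $E_i$, then necessarily $\|c_i g_{N_i}\|_{L^1}\geq (1-\eps)\mu(E_i)$. In your Step 2 you want the earlier seeds' contribution at the $i$-th peak to be negligible, of size $O(\eps/M)$, so each block must supply the full height $1-\eps$ on its own. If the $E_i$ are (essentially) disjoint and their union has measure $\geq (1-\eps)\mu(X)$, summing gives $\|\tilde f\|_{L^1}=\sum_i\|c_i g_{N_i}\|_{L^1}\geq (1-\eps)^2\mu(X)$, so $\alpha\approx 1$ and there is no savings. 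Rescaling ``to absorb constants'' cannot fix this, because it rescales both $\|\tilde f\|_{L^1}$ and the peak height by the same factor. The savings in Ornstein's construction comes from the opposite regime: the mixed contribution of the earlier components on the later component's region is \emph{not} negligible --- it is a constant-order head start --- and the later seed only needs to fill the remaining gap.

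The paper implements this through an iterative ``gluing'' rather than a one-shot superposition. One proves a claim $P(\alpha)$: there is a ``good system'' $X$ and an \emph{ancient} non-negative Markov chain $(\tilde f_n)_{n\in\Z}$ (with $\tilde f_{n+1}=P\tilde f_n$ for all integer $n$, so the chain extends to $n\to-\infty$ with support of measure $O(3^n\mu(X))$) having $\|\tilde f_n\|_{L^1}=\alpha\mu(X)$ and $\sup_n\pi_*\tilde f_{2n}\geq 1-\eps$ off a small set. The base case $P(1)$ is an ``infinite ball'' $\bigsqcup_{n\geq 0}Y_n$ of half-infinite reduced words, with the boundary $Y_0/\!\sim$ split into $X_a,X_b$ and the shift suitably redefined there; the infinite depth is exactly what makes the ancient chain possible. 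The iteration step shows $P(\alpha)\Rightarrow P(\alpha(1-\tfrac{\alpha}{4}))$ by taking $X'=X\times\{1,2\}$, coupling the two copies only through a tiny set $E\subset X_b$ of measure $\kappa$ (so that the system remains $F_2^2$-ergodic but the two copies barely interact on the relevant time scale), and setting $\tilde f'_n$ to be the original $\tilde f_n$ on copy~1 and the \emph{time-delayed and amplitude-reduced} $(1-\tfrac{\alpha}{2})\tilde f_{n-2M}$ on copy~2. By the time the delayed copy peaks, copy~1 has mixed to the constant $\tfrac{\alpha}{2}$ on all of $X'$ (here Lemma~\ref{pet-good}, built on Theorem~\ref{pet}, supplies the mixing), so the sum reaches $\tfrac{\alpha}{2}+(1-\tfrac{\alpha}{2})(1-\eps)\approx 1-\eps$, while the $L^1$/measure ratio drops to $\alpha(1-\tfrac{\alpha}{4})$. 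Iterating drives $\alpha\to 0$. Without the head-start mechanism and the multiplicative recursion, the scale-separated superposition you propose cannot get $\alpha$ below $1$; that mechanism is the idea your proposal is missing.
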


Indeed, by setting $f := 4 \pi_* \tilde f$, and noting the pointwise bound $\tilde f \leq \pi^* f$ and the identity $\|f\|_{L^1(X)} = 4 \| \tilde f \|_{L^1(\tilde X)}$, we obtain Theorem \ref{main-2} (after replacing $\alpha$ by $\alpha/4$).

For inductive reasons, we will prove a technical special case of Theorem \ref{main-3}, in which the $F_2$-system is of a certain ``good'' form, and the sequence $(P^n \tilde f)_{n \geq 0}$ is part of an ``ancient Markov chain'' $(\tilde f_n)_{n \in \Z}$ that extends to arbitrarily negative times as well as arbitrarily positive times.  More precisely, let us define a \emph{good system} to be an $F_2$-system $(X,{\mathcal X},\mu,(T_g)_{g \in F_2})$ which admits a decomposition $X = X_a \cup X_b \cup X_0$ into three disjoint sets $X_a,X_b,X_0$ admitting the following (somewhat technical) properties:

\begin{itemize}
\item[(i)]  (Measure)  One has $\mu(X_a) = \mu(X_b) = \frac{1}{4} \mu(X)$ and $\mu(X_0) = \frac{1}{2} \mu(X)$.  Furthermore, for any $0 \leq \kappa \leq \mu(X_b)$, one can find a measurable subset of $X_b$ of measure exactly equal to $\kappa$.
\item[(ii)]  (Invariance)  One has $T_a X_a = X_a$ and $T_b X_b = X_b$.  Also, one has the inclusions $T_a X_b \subset T_b X_a \cup T_b^{-1} X_a \subset X_0$.  
\item[(iii)]  (Ergodicity)  One can partition $X_a$ into  finitely many $T_a$-invariant components $X_{a,1},\dots,X_{a,m}$ of positive measure, such that $T_a^2$ is ergodic on each of the components $X_{a,i}$; that is, the only $T_a^2$-invariant measurable subsets of $X_{a,i}$ have measure either $0$ or $\mu(X_{a,i})$.
\item[(iv)]  (Generation)  One has $X = \bigcup_{g \in F_2} T_g X_{a,i}$ up to null sets for each $i=1,\dots,m$.  
\end{itemize}

Note that relatively few conditions are required on the dynamics on $X_b$; in particular, the ergodicity hypotheses on the system are located in the disjoint region $X_a$.  This will allow us to easily modify the dynamics on $X_b$ in order to ``glue'' two good systems together in Section \ref{glue}.

\begin{figure} [t]
\centering
\includegraphics{./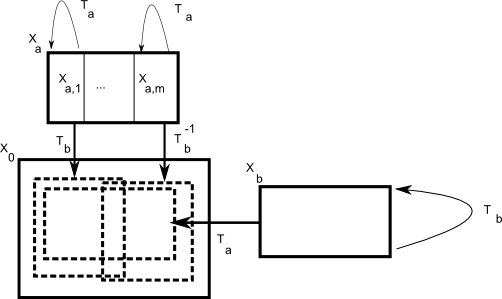}
\caption{A somewhat schematic depiction of a good system.  Only part of the action of $T_a$ and $T_b$ are displayed.}
\label{fig:good}
\end{figure}

See Figure \ref{fig:good}.
We will construct good systems in subsequent sections.  For now, we record one useful property of such systems:

\begin{lemma}[Pointwise ergodic theorem for good systems]\label{pet-good}  Every good system $(X,{\mathcal X},\mu,(T_g)_{g \in F_2})$ is $F_2^2$-ergodic.  In particular (by Theorem \ref{pet}), for any $f \in L^\infty(X)$, the averages ${\mathcal A}_{2n} f$ converge pointwise almost everywhere and in $L^1$ norm to $\frac{1}{\mu(X)} \int_X f\ d\mu$.  Furthermore, for any $\tilde f \in L^\infty(\tilde X)$, $P^{2n} \tilde f$ converge pointwise almost everywhere and in $L^1$ norm to $\frac{1}{\mu(X)} \int_{\tilde X} \tilde f\ d\tilde \mu$.
\end{lemma}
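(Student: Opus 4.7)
My plan is to prove the $F_2^2$-ergodicity claim first; the almost-sure and $L^1$ convergence of $\mathcal{A}_{2n} f$ then follows instantly from Theorem \ref{pet}, and the analogous statement for $P^{2n} \tilde f$ will reduce to that case together with a separate Markov-chain observation.

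For $F_2^2$-ergodicity, fix an $F_2^2$-invariant set $E \subset X$ with $\mu(E) > 0$; I will show $\mu(E) = \mu(X)$. Select the component $X_{a,1}$ supplied by property (iii). Since each $X_{a,i}$ is $T_a$-invariant and $T_a$ restricts by (ii) to a measure-preserving bijection of $X_a$ onto itself, we have the exact equality $T_a X_{a,1} = X_{a,1}$. The subgroup $F_2^2$ has index two in $F_2$ and is thus normal, with cosets $F_2^2$ and $a F_2^2 = F_2^2 a$; any $g \in F_2$ therefore writes as $g = g' a^k$ with $g' \in F_2^2$ and $k \in \{0,1\}$, yielding
\[
T_g X_{a,1} = T_{g'} T_a^k X_{a,1} = T_{g'} X_{a,1}.
\]
Hence the generation hypothesis (iv) strengthens to $X = \bigcup_{g \in F_2^2} T_g X_{a,1}$ modulo null sets, so some $g \in F_2^2$ satisfies $\mu(E \cap T_g X_{a,1}) > 0$. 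Applying $T_g^{-1}$ and using the $F_2^2$-invariance of $E$ gives $\mu(E \cap X_{a,1}) > 0$, and since $E \cap X_{a,1}$ is $T_a^2$-invariant the ergodicity of $T_a^2$ on $X_{a,1}$ in (iii) forces $E \supset X_{a,1}$ modulo a null set. Translating by each $g \in F_2^2$ and using the $F_2^2$-invariance of $E$ once more yields $E \supset T_g X_{a,1}$ for every such $g$, and the previous display then forces $E$ to be conull.

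The convergence of $\mathcal{A}_{2n} f$ is then immediate from Theorem \ref{pet} applied to this $F_2^2$-ergodic system, since $L^\infty(X) \subset L^1 \log L^1(X)$ when $\mu(X) < \infty$.

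For the convergence of $P^{2n} \tilde f$, I would handle the pullback case $\tilde f = \pi^* f$ directly via the Bufetov identity $\mathcal{A}_n f = \pi_* P^n \pi^* f$, which (combined with a symmetry property of $P$) reduces it to the previous statement. For general $\tilde f \in L^\infty(\tilde X)$, I would decompose along the four fibre slices $\tilde f_s(x) := \tilde f(x, s)$ and exploit the explicit formula for $P$ to see that after a bounded number of iterations the dependence on the $s$-coordinate is averaged away against a genuine spherical average on $X$, returning once again to the $F_2^2$-ergodicity of the base. The main obstacle here is upgrading the $L^2$-convergence supplied by the ergodicity argument to pointwise convergence on the lift $\tilde X$; I would extract this from a maximal inequality for the symmetric Markov operator $P$ in the spirit of Rota's alternating lemma, as exploited in Bufetov's $L \log L$ argument.
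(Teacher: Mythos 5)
Your argument for $F_2^2$-ergodicity is correct and is essentially the paper's own argument, merely cast in terms of invariant sets rather than invariant bounded functions: both use the fact that $T_a$-invariance of $X_{a,1}$ upgrades the generation axiom (iv) to $X = \bigcup_{g \in F_2^2} T_g X_{a,1}$ modulo null sets, then propagate the conclusion of (iii) along this $F_2^2$-orbit. The deduction for $\mathcal{A}_{2n} f$ from Theorem \ref{pet} (via $L^\infty \subset L\log L$ on a finite measure space) also matches the paper.

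The gap is in your treatment of the $P^{2n}\tilde f$ claim. Your plan to ``handle the pullback case directly via the Bufetov identity $\mathcal{A}_n f = \pi_* P^n \pi^* f$'' does not reduce to the previous statement even for $\tilde f = \pi^* f$: that identity controls only $\pi_* P^n \pi^* f$, whereas $P^n \pi^* f(x,s)$ itself equals
\[
P^n \pi^* f(x,s) = \frac{1}{3^{n-1}} \sum_{\substack{|g|=n \\ g \text{ begins with } s}} f(T_g^{-1} x),
\]
a spherical average over the \emph{sector} of reduced words starting with $s$ rather than the full sphere. Pointwise convergence of this quantity is not implied by pointwise convergence of $\mathcal{A}_n f$, and your proposed ``bounded number of iterations averages away the $s$-dependence'' does not happen: the $s$-dependence persists for all $n$. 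You correctly sense that the missing ingredient is a maximal inequality for the self-adjoint Markov operator $P$ coming from Rota's alternating procedure, but that is precisely the content one needs rather than a separate observation, and you do not supply it. The paper sidesteps all of this by citing Bufetov's Proposition 1 directly, which gives both pointwise a.e.\ and $L^1$ convergence of $P^{2n}\tilde g$ for $\tilde g$ in $L\log L$ (hence for $L^\infty$), with the limit identified as the stated constant once $F_2^2$-ergodicity is established. Replacing your sketch with that explicit citation would close the gap and is the intended route.
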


\begin{proof}  Let $f \in L^\infty(X)$ be an $F_2^2$-invariant function; to establish $F_2^2$-ergodicity, it will suffice to show that $f$ is constant almost everywhere.  As $f$ is $T_a^2$-invariant, we see from Axiom (iii) that $f$ is constant almost everywhere on each $X_{a,i}$.
 Since $F_2 = F_2^2 \cup F_2^2 a$, we see from Axiom (iv), the $T_a$-invariance of $X_{a,i}$, and the $F_2^2$-invariance of $f$ that $f$ is constant almost everywhere on $X$, as required.  The final claim does not quite follow from Theorem \ref{pet}, but is immediate from \cite[Proposition 1]{bufetov}.
\end{proof}

For any $\alpha > 0$, let $P(\alpha)$ denote the following claim:

\begin{claim}[$P(\alpha)$]\label{clam}  For any $\eps > 0$, there exists a good system $(X,{\mathcal X},\mu,(T_g)_{g \in F_2})$ with associated decomposition $X = X_a \cup X_b \cup X_0$, and a sequence of non-negative functions $\tilde f_n \in L^\infty(\tilde X)$ for $n \in \Z$ with the following properties:
\begin{itemize}
\item[(v)]  (Ancient Markov chain) $\tilde f_{n+1} = P \tilde f_n$ for all $n \in \Z$.  Equivalently, one has $\tilde f_{n+m} = P^m \tilde f_n$ for all $n \in \Z$ and $m \in \N$.  In particular, $\|\tilde f_n\|_{L^1(\tilde X)}$ is independent of $n$.
\item[(vi)]  (Size)  One has $\|\tilde f_n\|_{L^1(\tilde X)} = \alpha \mu(X)$ for some $n \in \Z$ (and hence for all $n \in \Z$).
\item[(vii)]  (Early support)  $\tilde f_n$ is supported in $\tilde X_0$ for all negative $n$.  Furthermore, there exists a finite $A>0$ such that $\tilde f_n$ is supported in a set of measure at most $A 3^n \mu(X)$ for all negative $n$.
\item[(viii)]  (Large maximum function)  We have
$$ \sup_{n \in \Z} \pi_* \tilde f_{2n}(x) \geq 1-\eps$$
for all $x \in X$ outside of a set of measure at most $\eps \mu(X)$.
\end{itemize}
\end{claim}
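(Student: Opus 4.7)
The plan is to prove Claim $P(\alpha)$ by an Ornstein-style iterative construction, reducing to (a) a base case $P(\alpha_0)$ for some concrete large $\alpha_0$ (e.g.\ $\alpha_0\geq 1$), and (b) a descent step showing $P(\alpha)\Rightarrow P(\alpha')$ with $\alpha'<\alpha$ by a definite multiplicative factor. Finitely many iterations of (b) starting from (a) then cover every $\alpha>0$.

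\textbf{Base case.} For $\alpha_0\geq 1$, I would first construct a good system using the ``infinite ball with boundary identifications'' construction outlined in the introduction and formalized in a subsequent section. On such a system I would take $\tilde f_0$ to be a rescaled indicator of a suitable small subset of $\tilde X_0$ carrying $L^1$ mass exactly $\alpha_0\mu(X)$; by Lemma \ref{pet-good}, the forward iterates $P^{2n}\tilde f_0$ then converge in $L^1$ and pointwise a.e.\ to the constant $\alpha_0\geq 1>1-\eps$, which easily supplies axiom (viii). To extend the chain backwards in time (axioms (v)--(vii)), I would use property (i) of a good system (the ability to carve $X_b$ into subsets of any prescribed measure) to hand-construct pre-images $\tilde f_{-1},\tilde f_{-2},\dots$ supported in $\tilde X_0$, each shrinking in support by a factor matching the $3$-fold branching rate of $P$, so that $P\tilde f_{-n-1}=\tilde f_{-n}$ and the support of $\tilde f_{-n}$ is at most $A\cdot 3^{-n}\mu(X)$.

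\textbf{Descent step.} For $P(\alpha)\Rightarrow P(\alpha')$, I would take $M$ independent copies of the good system coming from $P(\alpha)$, glue them along their $X_b$ regions via the construction of Section \ref{glue}, and arrange the ancient chains on the copies with staggered time shifts $N_1,\dots,N_M$ so that the climax of each chain's maximal function happens at a well-separated time. Weighting the sum by a common factor $\lambda<1$, the $L^1$ mass of the glued chain becomes $\lambda\alpha$ times the total measure, giving the candidate parameter $\alpha'=\lambda\alpha$. The early-support axiom (vii) ensures that each shifted chain is nearly invisible before its own climax, while Lemma \ref{pet-good} ensures that after its climax it has mixed to a near-constant background of order $\lambda\alpha(M-1)/M$ on the glued space. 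Choosing $\lambda$ just large enough that the weighted peak $\lambda(1-\eps)$ plus this combined mixed background from the other $M-1$ copies still exceeds $1-\eps$ forces $\lambda<1$, giving a strict reduction; as $M\to\infty$ the resulting contraction is roughly by a factor $(1-\eps)/(1-\eps+\alpha)$, which is bounded strictly below $1$ whenever $\alpha>0$ and so drives $\alpha$ arbitrarily close to $0$ under iteration.

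\textbf{Main obstacle.} The main obstacle is that the gluing and time-staggering must be done while preserving all of axioms (i)--(iv) for a good system (in particular ergodicity (iii), needed for Lemma \ref{pet-good} to fire on the glued system), and axioms (v)--(vii) for the chain (in particular the early-support condition must survive the sum of $M$ time-shifted components, which dictates how large the offsets $N_i$ must be chosen relative to the mixing time on each copy). The bookkeeping of the $\eps$-bad set on the glued system, obtained as the union of the bad sets on each copy, together with the quantitative reduction of $\alpha$, requires careful constant-chasing to ensure the descent converges with a fixed $\eps$. The positive pointwise ergodic theorem, via Lemma \ref{pet-good}, thus plays its promised crucial supporting role in powering this negative result.
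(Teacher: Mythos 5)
Your overall architecture matches the paper's: the claim $P(\alpha)$ is established for arbitrarily small $\alpha$ by a base case and a descent step, the descent is carried out by gluing time-delayed copies of a good system, and Lemma \ref{pet-good} supplies the mixing that makes the delayed copy ``see'' a near-constant background from the earlier one. However, your descent step as written has a genuine gap: applying a \emph{uniform} weight $\lambda<1$ to all $M$ staggered copies cannot work, because the copy that fires \emph{first} receives no background at all. At the time that earliest chain peaks, every other copy is still dormant (supported deep in $\tilde X_0$ of its own sheet by Axiom (vii)), and every chain that has already fired has not yet existed. So on that sheet the maximal function tops out at roughly $\lambda(1-\eps)<1-\eps$, and you lose Axiom (viii) on a full $1/M$ fraction of the glued space --- too large to hide in an $\eps$-bad set unless $M\geq 1/\eps$, and in that regime the constraint from the \emph{second} copy (background only $\lambda\alpha/M\leq\lambda\alpha\eps$) forces $\lambda\to 1$ as $\eps\to 0$, so you cannot establish $P(\alpha')$ for a fixed $\alpha'<\alpha$ and all $\eps$. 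Your contraction formula $(1-\eps)/(1-\eps+\alpha)$ is the constraint coming only from the \emph{last} copy; it ignores the binding constraint from the first. The paper avoids this by keeping copy $1$ at full, unreduced amplitude and only damping copy $2$ by the factor $1-\frac{\alpha}{2}$, so that copy $1$ reaches level $1-\eps$ unaided while copy $2$ recovers its missing $\frac{\alpha}{2}$ from the mixed residue of copy $1$. This $M=2$ step yields $\alpha' = \alpha(1-\frac{\alpha}{4})$, which, iterated, drives $\alpha$ to $0$. A non-uniform staircase with $w_1=1 > w_2 > \cdots$ would also work and is closer to the spirit of your $M$-copy proposal, but a single scalar $\lambda<1$ applied across the board does not.

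A secondary issue is the base case. You suggest ``hand-constructing pre-images $\tilde f_{-1},\tilde f_{-2},\dots$'' of a chosen $\tilde f_0$ using property (i) (the ability to carve $X_b$ into sets of prescribed measure). But property (i) concerns $X_b$, whereas the ancient chain must live in $\tilde X_0$, and more importantly $P$ is not surjective: an arbitrary nonnegative $\tilde f_0$ supported in $\tilde X_0$ need not have any nonnegative preimage under $P$, let alone an infinite backward chain with geometrically shrinking support. The paper instead builds the good system as a quotient of the ``infinite ball'' space $Y=\biguplus_{n\geq 0}Y_n$ precisely so that one can write $\tilde f_n$ down \emph{explicitly} for all $n<0$ (as constants on shells of $Y$) and then verify the forward relation $\tilde f_{n+1}=P\tilde f_n$ directly; the ancient chain is not obtained by inverting $P$. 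Your proposal should be reorganized to define the backward tail explicitly using the geometry of $Y_{-n}$ rather than appealing to (i), which does not give what you need here.
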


Note that our sequence $\tilde f_n$ is \emph{ancient} in the sense that it extends to arbitrary negative times $n \to -\infty$ as well as to arbitrary positive times $n \to \infty$.  This will be essential in order to set up suitable ``time delays'' in our arguments in later sections.  One can informally think of the $\tilde f_n$ as the (normalised) distribution at time $n$ of an ancient Markov process that starts from an infinitely small location deep inside $\tilde X_0$ at infinite negative time $n=-\infty$, and only escapes $\tilde X_0$ at or after time $n=0$, and which covers most of $X$ with density roughly $1$ or more at some point in time (but crucially, different regions of $X$ may be covered in this fashion at different times).

Observe that if $P(\alpha)$ holds for an arbitrarily small set of $\alpha>0$, and $\eps>0$ is arbitrary, then from axioms (vii), (viii), one has for any $N$ that
$$ \sup_{n \geq -2N} \pi_* f_{2n}(x) \geq 1-\eps$$
for all $x \in X$ outside of a set of measure at most $(\eps + \frac{9}{8} A 3^{-2N}) \mu(X)$.  Taking $N$ large enough (depending on $\eps$, $A$) and setting $\tilde f:= \tilde f_{-2N}$, we obtain Theorem \ref{main-3} (after adjusting $\eps$ as necessary).  It thus suffices to show that $P(\alpha)$ holds for arbitrarily small $\alpha>0$.  This will be accomplished using the following two key theorems (the second of which being a variant of \cite[Lemma 4]{ornstein}):

\begin{theorem}[Initial construction]\label{initial}  The claim $P( 1 )$ is true.
\end{theorem}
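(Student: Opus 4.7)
The plan is to construct a good $F_2$-system together with an explicit ancient Markov chain, exploiting the $3$-to-$1$ branching structure of $P$: for any non-negative $\tilde g \in L^1(\tilde X)$, the support of $P\tilde g$ has $\tilde\mu$-measure at most three times that of $\tilde g$. Consequently, going backward under $P$ the supports can contract by a factor of $3$ per step, which is exactly the rate called for by axiom (vii). The mixing condition (viii) will come essentially for free from Lemma \ref{pet-good}.

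First I would design a good system $(X,\mathcal X,\mu,(T_g)_{g \in F_2})$ whose lifted initial region $\tilde X_0$ contains a measurable ``tree tail'' $\bigsqcup_{k \geq 0}\tilde L_{-k}$, with $\tilde\mu(\tilde L_{-k}) = c \cdot 3^{-k}\mu(X)$ for a small fixed $c > 0$ (so the tail has finite total $\tilde\mu$-measure of order $\mu(X)$), and with $\tilde L_{-k} \subset \tilde X_0$ for $k \geq 1$. The $F_2$-action on the tail will be arranged so that the forward map $P$ realises a genuine $3$-to-$1$ expansion between consecutive levels; explicitly, $P\mathbf 1_{\tilde L_{-k}} = \tfrac{1}{3}\mathbf 1_{\tilde L_{-k+1}}$ for every $k \geq 1$, mirroring the branching of reduced words in $F_2$ by one letter. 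Outside the tail, the remainder of $X$ is constructed so that axioms (i)--(iv) hold; the ergodicity axiom (iii) is secured by placing an irrational circle rotation (or other standard ergodic transformation) on each component $X_{a,i}$.

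Next I would define the ancient chain. For $k \geq 1$, set $\tilde f_{-k} := \frac{3^k}{c}\mathbf 1_{\tilde L_{-k}}$; by construction $\|\tilde f_{-k}\|_{L^1(\tilde X)} = \mu(X)$, and the identity $P\tilde f_{-k} = \tilde f_{-k+1}$ follows from the tree branching. Setting $\tilde f_n := P^{n+1}\tilde f_{-1}$ for $n \geq 0$ then extends this to a full ancient chain with $\|\tilde f_n\|_{L^1(\tilde X)} = \mu(X)$ for every $n \in \Z$ by $P$-invariance of $\tilde\mu$. This verifies (v) and (vi) with $\alpha = 1$, and (vii) with $A = c$. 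Moreover $\tilde f_0 = \tfrac{1}{c}\mathbf 1_{\tilde L_0} \in L^\infty(\tilde X)$, so Lemma \ref{pet-good} yields $P^{2n}\tilde f_0 \to 1$ pointwise almost everywhere and in $L^1$ as $n \to \infty$; hence $\pi_*\tilde f_{2n}(x) \to 1$ pointwise almost everywhere on $X$, and choosing $n$ sufficiently large delivers (viii).

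The principal obstacle lies in the first step: building a good system whose initial region carries the required $3$-ary tree tail in a manner compatible with both the $F_2$-action and the branching structure of $P$, while also engineering the bulk of $X$ so that the ergodicity axiom (iii) and generation axiom (iv) hold. This amounts to an explicit combinatorial construction of $(X,T_a,T_b)$ in which the forward tail dynamics $\tilde L_{-k} \to \tilde L_{-k+1}$ under $P$ correspond exactly to the $3$-to-$1$ extension of reduced words by one letter, and in which $T_a$ and $T_b$ remain globally invertible and measure-preserving; that is where I expect most of the work to lie.
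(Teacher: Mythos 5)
Your plan is essentially the plan of the paper: build an $F_2$-system whose ``interior'' $X_0$ carries a $3$-adic tree tail $\tilde L_0 \supset \tilde L_{-1} \supset \cdots$ inside the lift (with $\tilde\mu(\tilde L_{-k}) \asymp 3^{-k}$), define the ancient chain by $\tilde f_{-k} := \tfrac{3^k}{c}\mathbf 1_{\tilde L_{-k}}$ so that $P$ acts as a $3$-to-$1$ level shift, and then obtain axiom (viii) from Lemma \ref{pet-good}. The normalisations all check out: $\|\tilde f_{-k}\|_{L^1} = \mu(X)$, the support sizes decay like $3^{-k}$, and $P\tilde f_{-k} = \tilde f_{-k+1}$ follows from $P\mathbf 1_{\tilde L_{-k}} = \tfrac13\mathbf 1_{\tilde L_{-k+1}}$. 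The deduction of (viii) from pointwise convergence $\pi_*\tilde f_{2n} \to 1$ is also fine.

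The gap is that the construction you defer to ``the first step'' is the whole content of the theorem, and you have not carried it out. You write that the principal obstacle is to produce a good system whose interior contains the tree tail compatibly with the $F_2$-action while keeping $T_a, T_b$ globally invertible and measure-preserving and keeping (i)--(iv) --- and that is precisely the nontrivial part. The paper resolves it by taking $Y_n$ to be the space of half-infinite reduced words $s_n s_{n+1}\cdots$ with the natural $3^{-n}$-weighted measure, restricting to $\bigsqcup_{n\geq 0} Y_n$ (an ``infinite ball'' with boundary $Y_0$), and then --- and this is the step your sketch does not anticipate --- quotienting $Y_0$ by the letterwise inversion $s_0 s_1 s_2\cdots \mapsto s_0^{-1}s_1^{-1}s_2^{-1}\cdots$ so that the outward shifts can be wrapped back into the boundary, after which $T_a$ is set to an irrational rotation on $X_a$ and $T_b$ to the identity on $X_b$. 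Without some such boundary identification and explicit redefinition of the shifts there, invertibility of $T_a, T_b$ on all of $X$ fails, and axioms (ii)--(iv) have no reason to hold. So the proposal correctly identifies the target structure and the role of the pointwise ergodic theorem, but the key construction is only asserted, not supplied.
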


\begin{theorem}[Iteration step]\label{iterate}  Suppose that $P(\alpha)$ holds for some $0 < \alpha \leq 1$.  Then $P( \alpha (1-\frac{\alpha}{4}) )$ is true.
\end{theorem}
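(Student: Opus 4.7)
The plan is to apply Ornstein's time-delayed seeding idea \cite{ornstein} in the ancient-Markov-chain setting of Claim \ref{clam}. Fix $0 < \alpha \leq 1$ with $P(\alpha)$ true and let $\eps > 0$. Pick a small auxiliary parameter $\eps' \ll \eps, \alpha$ (to be chosen at the end) and invoke $P(\alpha)$ with error $\eps'$ to obtain a good system $(X, \mathcal{X}, \mu, (T_g))$ with decomposition $X = X_a \cup X_b \cup X_0$ and an ancient chain $(\tilde f_n)_{n \in \Z}$ of total mass $\alpha\mu(X)$. By Lemma \ref{pet-good} applied to this system, the even iterates $\pi_* \tilde f_{2n}$ converge in $L^1$ and almost everywhere to the constant $\alpha$; fix an integer $T$ large enough that $|\pi_* \tilde f_{2n}(x) - \alpha| \leq \eps'$ outside a set of measure $\leq \eps' \mu(X)$ for every $n \geq T$.

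Next, take a second copy of this good system and apply the gluing construction of Section \ref{glue} to fuse the two copies into a new good system $(X', \mathcal{X}', \mu', (T'_g))$ of total measure $\mu'(X') = 2\mu(X)$. The gluing is arranged so that (a) the lifts $\tilde f_n^{(1)}$ and $\tilde f_n^{(2)}$ of the two ancient chains to $\tilde X'$ remain ancient Markov chains for the new operator $P'$, (b) for $n \leq 0$ each lift is essentially supported inside the embedded $\tilde X_0$ of its respective copy, with the same $O(3^n)$ support bound as before, and (c) by $F_2^2$-ergodicity of the glued system, $P'^{2n}\tilde f_0^{(i)}$ converges to the constant $\alpha \mu(X)/\mu'(X') = \alpha/2$.

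On this glued system I would define the candidate chain
\[
\tilde g_n := \tilde f_n^{(1)} + \bigl(1 - \tfrac{\alpha}{2}\bigr)\, \tilde f_{n-T}^{(2)},
\]
that is, copy $1$ runs at full amplitude and copy $2$ runs at amplitude $1 - \alpha/2$ with time delay $T$. Its $L^1(\tilde X')$ mass is $\alpha\mu(X) + (1 - \alpha/2)\alpha\mu(X) = 2\alpha(1 - \alpha/4)\mu(X) = \alpha(1 - \alpha/4)\mu'(X')$, giving (vi); (v) is immediate from linearity, and (vii) follows by summing the two early-support bounds. For (viii): at time $n = 0$ the first chain peaks with $\pi'_* \tilde g_0 \geq \pi_* \tilde f_0^{(1)} \geq 1 - \eps'$ on most of the embedded copy of $X$, while the second summand is negligible (supported on an $O(3^{-T})$ fraction). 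At time $n = T$, the first chain has mixed to the constant $\alpha/2$ and the second chain peaks, giving
\[
\pi'_* \tilde g_{2T} \geq \tfrac{\alpha}{2} + \bigl(1 - \tfrac{\alpha}{2}\bigr)(1 - \eps') \geq 1 - \bigl(1 - \tfrac{\alpha}{2}\bigr)\eps'
\]
on most of the second embedded copy. The exceptional set has measure $O(\eps')\mu'(X')$, and choosing $\eps'$ small enough delivers (viii) with error $\eps$.

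The main obstacle I anticipate lies entirely in the gluing step, which will be the subject of Section \ref{glue}: one must verify that $(X', \ldots)$ really is a good system (in particular the generation axiom (iv) forces the two copies to be connected by modifying the dynamics on the $X_b$ components of each copy), and that the lifted chains $\tilde f_n^{(i)}$ genuinely give ancient Markov chains for $P'$ with their peak and early-support properties essentially preserved. The flexibility built into the good-system axioms---in particular the freedom to alter the dynamics on $X_b$ without disturbing the ergodicity located in $X_a$ or the early-support condition---is precisely what should make such a gluing possible.
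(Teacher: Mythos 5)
Your overall blueprint matches the paper's: glue two copies of the good system with a small $X_b$-coupling, run the second copy with a time delay and amplitude $1-\alpha/2$, and use mixing of the first copy to supply the deficit on the second. The arithmetic $\alpha + (1-\tfrac{\alpha}{2})\alpha = 2\alpha(1-\tfrac{\alpha}{4})$ and the normalization by $\mu'(X') = 2\mu(X)$ are correct, and the axioms (v'), (vi'), (vii') come out as you describe. But there is a genuine ordering error in how the time delay is chosen, and it is exactly the point on which the whole argument turns.

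You fix $T$ \emph{before} the gluing, as the mixing time of the original chain $\tilde f_n$ on $X$ (convergence of $\pi_*\tilde f_{2n}$ to $\alpha$). You then use this same $T$ as the delay in $\tilde g_n = \tilde f_n^{(1)} + (1-\tfrac{\alpha}{2})\tilde f_{n-T}^{(2)}$ and assert that ``at time $n=T$ the first chain has mixed to the constant $\alpha/2$'' on the second embedded copy. That assertion is about mixing of the lifted chain on the \emph{glued} system $X'$, not of $\tilde f_n$ on $X$, and the two rates are unrelated: the lifted chain starts entirely on copy $1$, and the only pathway to copy $2$ is the coupling set $E\subset X_b$ of measure $\kappa$, so spreading to copy $2$ takes at least on the order of $1/\kappa$ steps. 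Meanwhile $\kappa$ has to be made small (depending on a finite peak window, obtained by Egorov, so that the modified dynamics do not spoil the peak of the undelayed copy during that window --- this is the content of Proposition \ref{lima} in the paper, and you omit it). Consequently the delay must be chosen \emph{after} $\kappa$, and $\kappa$ must be chosen \emph{after} the peak window. The paper's proof makes exactly this three-step parameter ordering explicit: first Egorov gives a finite window $N$ with $\sup_{-N\le n\le N}\pi_*\tilde f_{2n}\ge 1-\eps/3$ off a small set; then $\kappa$ is chosen small depending on $\eps,N$ so that the $X'$-dynamics agree with the trivially lifted $X$-dynamics up to $O(\kappa)$ over $2N$ steps; then $M$ is chosen large depending on $\eps,\kappa$ and the system so that $P'^{2n}$ applied to the undelayed lift has settled near $\alpha/2$ for all $n\ge 2M-2N$, again by Lemma~\ref{pet-good} and Egorov. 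Your single parameter $T$ conflates the roles of $N$ and $M$, and because it is fixed before $\kappa$, the mixing claim on $X'$ is unjustified. Splitting $T$ into $N$ (chosen first) and $M$ (chosen last, after $\kappa$) repairs the argument and gives exactly the paper's proof.
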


From Theorem \ref{initial} and Theorem \ref{iterate} we see that the infimum of all $0 < \alpha \leq 1$ for which $P(\alpha)$ holds is zero, and the claim follows.  Thus it suffices to establish Theorem \ref{initial} and Theorem \ref{iterate}.  This will be accomplished in the next two sections.

\section{The initial construction}

We now prove Theorem \ref{initial}.  We will in fact construct an example of a good system $(X, {\mathcal X}, \mu, (T_g)_{g \in F_2})$ and functions $\tilde f_n$ which witness $P(1)$ for all $\eps>0$ at once.

We begin by constructing an appropriate measure space $(X, {\mathcal X}, \mu)$.  For each integer $n$, let $Y_n$ denote the space of half-infinite reduced words $(s_m)_{m \geq n} = s_n s_{n+1} s_{n+2} \dots$, in which each of the $s_i$ are drawn from the alphabet $\{a,b,a^{-1},b^{-1}\}$ and $a,a^{-1}$ and $b,b^{-1}$ are never adjacent.  We give this space the product $\sigma$-algebra ${\mathcal Y}_n$ (that is, the minimal $\sigma$-algebra for which the coordinate maps $(s_m)_{m \leq n} \mapsto s_m$ are all measurable).  By the Kolmogorov extension theorem, we may construct a probability measure $\mu_n$ on $Y_n$ such that each finite reduced subword $s_n \dots s_{n+k}$ for $k \geq 0$ occurs as an initial segment with probability $\frac{1}{4 \times 3^k}$; one can view this measure as the law of the random half-infinite reduced word constructed by choosing $s_n$ uniformly at random from $\{a,b,a^{-1},b^{-1}\}$, then recursively selecting $s_{n+i+1}$ for $i=0,1,2,\dots$ to be drawn uniformly from $\{a,b,a^{-1},b^{-1}\} \backslash \{s_{n+i}^{-1}\}$.

The disjoint union $Y := \biguplus_{n \in \Z} Y_n$ of the $Y_n$ admits an action $(S_g)_{g \in F_2}$ of $F_2$, with the action $S_s$ of a generator $s \in \{a,b,a^{-1},b^{-1}\}$ defined by setting
$$ S_s ( s_n s_{n+1} s_{n+2} \dots ) := s s_n s_{n+1} s_{n+2} \dots \in Y_{n-1}$$
for $s_n s_{n+1} s_{n+2} \dots \in Y_n$ and $s \in \{a,b,a^{-1},b^{-1}\} \backslash s_n$, and
$$ S_s ( s_n s_{n+1} s_{n+2} \dots ) := s_{n+1} s_{n+2} \dots \in Y_{n-1}$$
for $s_n s_{n+1} s_{n+2} \dots \in Y_n$ and $s = s_n^{-1}$; thus $S_g$ is the operation of formal left-multiplication by $g$, after reducing any non-reduced words.  If we give $Y$ the measure $\mu_Y := \sum_{n \in \Z} 3^{-n} \mu_n$, then one can easily verify that this action is measure-preserving.  Unfortunately, $\mu_Y$ is an infinite measure due to the contribution of the negative $n$, and so this space is not quite suitable for our needs.  Instead, we shall work with a certain subquotient of $Y$, defined as follows.

\begin{figure} [t]
\centering
\includegraphics{./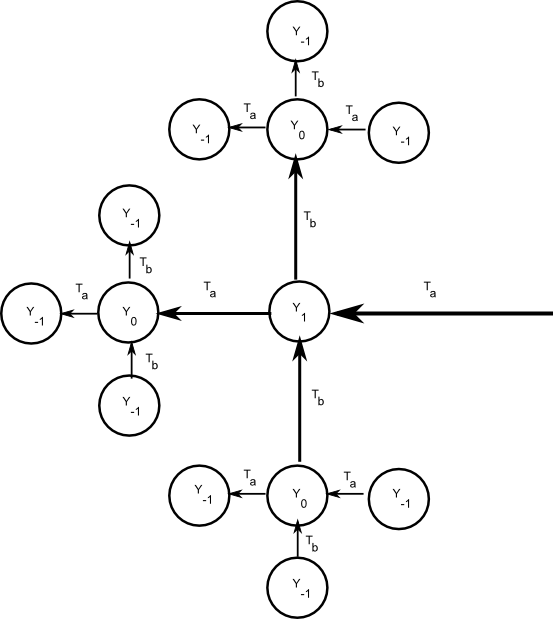}
\caption{A fragment of the infinite measure space $Y$.  The centre disk represents a portion of $Y_1$ consisting of reduced words $s_1 s_2 \dots$ with initial letter $s_1 = a$.  The remaining disks are images of this disk under shifts by various elements of $F_2$, and all have equal measure with respect to $\mu_Y$.  This image should be compared with the infinite tree that is the Cayley graph of $F_2$.}
\label{fig:ysp}
\end{figure}

Firstly, we restrict $Y$ to the space $\biguplus_{n \geq 0} Y_n = \biguplus_{n \geq 1} Y_n \uplus Y_0$, which can be thought of as a suitably rescaled limit of an ``infinitely large ball'' in $F_2$, with $Y_0$ being the ``boundary'' of this ball, and the $Y_n$ lying increasingly deeper in the ``interior'' of the ball as $n$ increases (see Figure \ref{fig:ysp}).  This makes the shift maps $S_s$, $s \in \{a,b,a^{-1},b^{-1}\}$ partially undefined on the $Y_0$ boundary, but we will fix this later by redefining these maps on (a quotient) of $Y_0$.  Next, we introduce a reflection operation $x \mapsto \overline{x}$ on the boundary $Y_0$ by mapping
$$ \overline{s_0 s_1 s_2 \dots} := s_0^{-1} s_1^{-1} s_2^{-2} \dots.$$
It is clear that this map preserves the measure $\mu_0$.  If we then form the quotient space $Y_0/\sim := \{ \{x,\overline{x}\}: x \in Y_0 \}$, we can obtain a probability measure $\mu_0/\sim$ on $Y_0/\sim$ by pushing forward the probability measure $\mu_0$ under the quotient map.  We observe that $Y_0/\sim$ splits into two components of equal measure $1/2$, namely $((S_a Y_1 \cap Y_0) \cup (S_a^{-1} Y_1 \cap Y_0))/\sim$ and $((S_b Y_1 \cap Y_0) \cup (S_b^{-1} Y_1 \cap Y_0))/\sim$, noting that the sets $S_a Y_1 \cap Y_0, S_a^{-1} Y_1 \cap Y_0$ are disjoint reflections of each other, and similarly for $S_b Y_1 \cap Y_0, S_b^{-1} Y_1 \cap Y_0$.

We then define $X$ to be the quotient space $\biguplus_{n \geq 1} Y_n \uplus (Y_0/\sim)$ with measure $\mu := \sum_{n \geq 1} 3^{-n} \mu_n + \frac{1}{2} (\mu_0/\sim)$, thus
$$ \mu(X) = \sum_{n \geq 1} 3^{-n} + \frac{1}{2} = 1.$$
We set $X_0 := \biguplus_{n \geq 1} Y_n$, $X_a := ((S_b Y_1 \cap Y_0) \cup (S_b^{-1} Y_1 \cap Y_0))/\sim$, and $X_b := ((S_a Y_1 \cap Y_0) \cup (S_a^{-1} Y_1 \cap Y_0))/\sim$.  Thus
$$ \mu(X_0) = \sum_{n \geq 1} 3^{-n}  = \frac{1}{2}$$
and $\mu(X_a) = \mu(X_b) = \frac{1}{4}$.   One can think of $X_0$ as the ``interior'' of $X$, with $X_a$ and $X_b$ being two equally sized pieces of the ``boundary'' of $X_0$.  Also, $X_a$, $X_b$ are Cantor spaces (and $\mu$ is a Cantor measure on such spaces), and so one can easily construct measurable subsets of $X_b$ of arbitrary measure between $0$ and $\mu(X_b)$.  Thus Axiom (i) is satisfied.  Also, one can easily create a measure-preserving invertible map $T^0_a \colon X_a \to X_a$ such that $(T^0_a)^2$ is ergodic on $X_a$; this can be done for instance by identifying $X_a$ (which is an atomless standard probability space) as a measure space (up to null sets) with the unit circle with Haar measure, and then setting $T^0_a$ to be an irrational translation map.

We now define the shifts $T_a \colon X \to X$ and $T_b \colon X \to X$ as follows.

\begin{enumerate}
\item If $x \in X_0$, then $T_a x$ is defined to be $S_a x$ projected onto $X$, and $T_b x$ is similarly defined to be $S_b$ projected onto $X$.  (The projection is only necessary of course if $S_a x$ or $S_b x$ lands in $Y_0$.)
\item If $x \in X_a$, then $T_a x := T_a^0 x$.  If instead $x \in X_b$, $T_a x$ is defined to be $S_a x' \in Y_1$, where $x' \in S_a^{-1} Y_1 \cap Y_0$ is the lift of $x$ to $S_a^{-1} Y_1 \cap Y_0$.
\item If $x \in X_b$, then $T_b x = x$.  If instead $x \in X_a$, $T_b x$ is defined to be $S_b x' \in Y_1$, where $x' \in S_b^{-1} Y_1 \cap Y_0$ is the lift of $x$ to $S_b^{-1} Y_1 \cap Y_0$.
\end{enumerate}

One then defines $T_g$ for the remaining $g \in F_2$ in the usual fashion.  In particular, one sees that for any $x$ in the interior $X_0$ and any $s \in \{a,b,a^{-1},b^{-1}\}$, $T_s x$ is equal to $S_s x$ projected onto $X$.  Informally, the shifts $T_s \colon X \to X$ for $s \in \{a,b,a^{-1},b^{-1}\}$ are inherited from the shifts $S_s \colon Y \to Y$ except for the boundary actions of $T_a,T_a^{-1}$ on $X_a$ and of $T_b, T_b^{-1}$ on $X_b$, which are given by $T_0^a$ (and its inverse) and the identity map respectively.  (There is nothing special about the identity map here; an arbitrary measure-preserving map on $X_b$ could be substituted here for our purposes.)

\begin{proposition}  $(X, {\mathcal X}, \mu, (T_g)_{g \in F_2})$ is a good system.
\end{proposition}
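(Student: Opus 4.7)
The plan is to verify each of the four axioms (i)--(iv) directly from the explicit description of the measure and the shifts $T_a,T_b$.

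Axiom (i) is essentially built in: the definition $\mu := \sum_{n\geq 1} 3^{-n}\mu_n + \frac{1}{2}(\mu_0/\sim)$ immediately gives $\mu(X_0) = \sum_{n\geq 1} 3^{-n} = \frac{1}{2}$ and $\mu(X_a) = \mu(X_b) = \frac{1}{4}$, and the existence of measurable subsets of $X_b$ of any prescribed measure in $[0,\mu(X_b)]$ follows from $X_b$ being an atomless standard probability space (a quotient of a portion of the product-measure Cantor space $Y_0$). Axiom (iii) is also immediate: take $m=1$ and $X_{a,1} := X_a$, so that $T_a^2|_{X_a} = (T_a^0)^2$; having chosen $T_a^0$ as an irrational rotation of the circle model of $X_a$, $(T_a^0)^2$ is again an irrational rotation and therefore ergodic.

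The main substance lies in axiom (ii). The equalities $T_a X_a = X_a$ and $T_b X_b = X_b$ are built into the definitions, since $T_a|_{X_a} = T_a^0$ is a measure-preserving bijection of $X_a$ and $T_b|_{X_b}$ is the identity. For the chain of inclusions $T_a X_b \subset T_b X_a \cup T_b^{-1} X_a \subset X_0$, I would identify each of the three sets as an explicit subset of $Y_1$. Lifting $x \in X_b$ to its $S_a^{-1} Y_1 \cap Y_0$ representative $a^{-1} s_1 s_2 \dots$ and applying $S_a$ (which cancels the initial $a a^{-1}$) shows $T_a X_b = \{s_1 s_2 \dots \in Y_1 : s_1 \ne a\}$; symmetrically $T_b X_a = \{s_1 \ne b\} \subset Y_1$. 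The set $T_b^{-1} X_a$ is computed by tracing the action of $T_b$ on the interior $X_0$, where it agrees with $S_b$ projected to $X$: a point of $Y_n$ with $n \geq 2$ cannot have image in $X_a$, while a point $s_1 s_2 \dots \in Y_1$ has $T_b$-image $b s_1 s_2 \dots \in Y_0$ lying over $X_a$ precisely when $s_1 \ne b^{-1}$, so $T_b^{-1} X_a = \{s_1 \ne b^{-1}\} \subset Y_1$. Since $\{s_1 \ne b\} \cup \{s_1 \ne b^{-1}\} = Y_1$, both the middle and right inclusions follow at once.

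For axiom (iv), given $y \in X$, I would produce $g \in F_2$ with $T_g^{-1} y \in X_a$. For $y \in Y_n \subset X_0$ with $n \geq 1$, written as $y = s_n s_{n+1} \dots$, successive applications of a generator other than $s_n^{-1}$ peel $y$ back through $Y_{n-1}, Y_{n-2}, \dots, Y_1$ (always taking the non-reduction branch of the shift), and a final application of $T_b$ or $T_{b^{-1}}$ (whichever avoids triggering a reduction on the leading letter) lands the point in $X_a$. The case $y \in X_a$ is handled by $g = 1$, and $y \in X_b$ by first applying $T_a$ to move into $Y_1$ and then continuing as above.

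The main obstacle is the analysis of $T_b^{-1} X_a$ in axiom (ii): although the other inclusions follow directly from how $T_a$ and $T_b$ are defined on the boundary components $X_a$ and $X_b$, the set $T_b^{-1} X_a$ requires one to invert a map on $X_0$ that was itself defined via the formal shift $S_b$ followed by projection to $Y_0/\sim$, and to distinguish carefully the reduction and non-reduction branches of this shift on each $Y_n$.
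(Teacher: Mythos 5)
Your proof is correct and follows the same direct-verification approach as the paper, which declares the preliminary checks (that $T_a,T_b$ are measure-preserving bijections) and Axiom (iv) to be ``routine'' and ``easily verified'' without spelling them out. Your explicit identification of $T_a X_b$, $T_b X_a$, and $T_b^{-1} X_a$ as the subsets $\{s_1 \neq a\}$, $\{s_1 \neq b\}$, $\{s_1 \neq b^{-1}\}$ of $Y_1$ is exactly the content of the paper's one-line chain of inclusions for Axiom (ii), and your peeling argument through $Y_{n-1},\dots,Y_1$ followed by a single $T_{b^{\pm 1}}$ step is the intended verification of Axiom (iv).
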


\begin{proof}
It is a routine matter to verify that $T_a, T_b$ are invertible and measure-preserving, so that  $(X, {\mathcal X}, \mu, (T_g)_{g \in F_2})$ is an $F_2$-system.  Axiom (i) was already verified.  For Axiom (ii), we note that $T_a X_b \subset Y_1 \subset (S_b Y_0 \cap Y_1) \cup (S_b^{-1} Y_0 \cap Y_1) = T_b X_0 \cup T_b^{-1} X_1$, as required.  We set $m=1$ and $X_{a,1} := X_a$, then Axiom (iii) is true from construction, and Axiom (iv) is also easily verified.
\end{proof}

It remains to construct a sequence $\tilde f_n$ of non-negative functions in $L^\infty(\tilde X)$ for each $n \in \Z$ obeying Axioms (v)-(viii) with $\alpha=1$.  For negative $n$, we define $\tilde f_n$ by setting
$$ \tilde f_n(x, s) := 4 \times 3^{-n}$$
whenever $x \in X$ and $s \in \{a,b,a^{-1},b^{-1}\}$ are such that $x \in Y_{-n}$ and $S_s x \in Y_{-n-1}$, and $\tilde f_n(x,s)=0$ otherwise.  These are clearly non-negative functions in $L^\infty(\tilde X)$ obeying Axiom (vii). It is routine to verify that $\tilde f_{n+1} = P \tilde f_n$ for all $n \leq -2$.  If we then define $\tilde f_n$ for non-negative $n$ by the formula
$$ \tilde f_n := P^{n+1} \tilde f_{-1}$$
then we have Axiom (v).  For negative $n$ we have
$$ \| \tilde f_n \|_{L^1(\tilde X)} = 1,$$
which gives Axiom (vi) (using Axiom (v) to extend to non-negative $n$).  Finally, from Lemma \ref{pet-good} we see that $\tilde f_n$ converges pointwise almost everywhere to $1$ as $n \to +\infty$, and so Axiom (vii) follows from Egorov's theorem.  This concludes the proof of Theorem \ref{initial}.

\section{The iteration step}\label{glue}

We now prove Theorem \ref{iterate}.  Let $0 < \alpha \leq 1$ be such that $P(\alpha)$ holds.  By Claim \ref{clam} (with $\eps$ replaced by $\eps/4$), and normalising $X$ to have measure $1$, we may find a good system $(X,{\mathcal X},\mu,(T_g)_{g \in F_2})$ with associated decomposition $X = X_a \cup X_b \cup X_0$ and measure $\mu(X)=1$, and a sequence of non-negative functions $\tilde f_n \in L^\infty(\tilde X)$ for $n \in \Z$ with the following properties:
\begin{itemize}
\item[(v)]  (Ancient Markov chain) $\tilde f_{n+1} = P \tilde f_n$ for all $n \in \Z$.  
\item[(vi)]  (Size)  One has $\|\tilde f_n\|_{L^1(\tilde X)} = \alpha$ for all $n \in \Z$.
\item[(vii)]  (Early support)  $\tilde f_n$ is supported in $\tilde X_0$ for all negative $n$.  Furthermore, there exists a finite $A>0$ such that $\tilde f_n$ is supported in a set of measure at most $A 3^n$ for all negative $n$.
\item[(viii)]  (Large maximum function)  We have
$$ \sup_{n \in \Z} \pi_* \tilde f_{2n}(x) \geq 1-\eps/4$$
for all $x \in X$ outside of a set of measure at most $\eps/4$.
\end{itemize}

It will suffice to construct a good system $(X',{\mathcal X}',\mu',(T'_g)_{g \in F_2})$ with associated decomposition $X' = X'_a \cup X'_b \cup X'_0$, Markov operator $P'$, and measure $\mu'(X')=2$, and a sequence of non-negative functions $\tilde f'_n \in L^\infty(\tilde X')$ for $n \in \Z$ with the following properties:
\begin{itemize}
\item[(v')]  (Ancient Markov chain) $\tilde f'_{n+1} = P' \tilde f'_n$ for all $n \in \Z$.  
\item[(vi')]  (Size)  One has $\|\tilde f'_n\|_{L^1(\tilde X')} = \alpha (2 - \frac{\alpha}{2})$ for all $n \in \Z$.
\item[(vii')]  (Early support)  $\tilde f'_n$ is supported in $\tilde X'_0$ for all negative $n$.  Furthermore, there exists a finite $A'>0$ such that $\tilde f'_n$ is supported in a set of measure at most $2 A' 3^n$ for all negative $n$.
\item[(viii')]  (Large maximum function)  We have
$$ \sup_{n \in \Z} \pi_* \tilde f'_{2n}(x') \geq 1-\eps$$
for all $x' \in X'$ outside of a set of measure at most $2\eps$.
\end{itemize}

We construct this system as follows.  First, from Axiom (viii) and Egorov's theorem, we may find a natural number $N$ such that
\begin{equation}\label{proj}
 \sup_{-N \leq n \leq N} \pi_* \tilde f_{2n}(x) \geq 1-\eps/3
\end{equation}
for all $x \in X$ outside of a set of measure at most $\eps/3$.  We let $0 < \kappa < 1/4$ be a small quantity depending on $\eps, N$ and the $\tilde f_n$ to be chosen later.  We will construct the good system $(X',{\mathcal X}',\mu',(T'_g)_{g \in F_2})$ to be two copies of $(X,{\mathcal X},\mu,(T_g)_{g \in F_2})$ glued together by a small amount of coupling, with the $\kappa$ parameter measuring the amount of coupling.  More precisely, we define the measure space $(X', {\mathcal X}', \mu')$ to be the product of $(X,{\mathcal X}, \mu)$ with the two-element set $\{1,2\}$ with counting measure.  Next, using Axiom (i), we can find a subset $E$ of $X_b$ of measure exactly $\kappa$.  We now define the shift maps $T'_a, T'_b\colon X' \to X'$ as follows.  The map $T'_a$ is a trivial lift of $T_a$, thus
$$ T'_a (x, i ) := (T_a x, i )$$
for $x \in X$ and $i \in \{1,2\}$.  The map $T'_b$ is an \emph{almost} trivial lift of $T_b$.  Namely, we define
$$ T'_b (x, i ) := (T_b x, i )$$
for $x \in X \backslash E$ and $i \in \{1,2\}$, but define
$$ T'_b (x, i ) := (T_b x, 3-i )$$
for $x \in E$ and $i \in \{1,2\}$; see Figure \ref{fig:dupl}.  Finally, we partition $X' = X'_a \cup X'_b \cup X'_0$ where $X'_a := X_a \times \{1,2\}$, $X'_b := X_b \times \{1,2\}$, $X'_0 := X_0 \times \{1,2\}$.  We then define $T'_g$ for the remaining $g \in F_2$ in the usual fashion.

\begin{figure} [t]
\centering
\includegraphics{./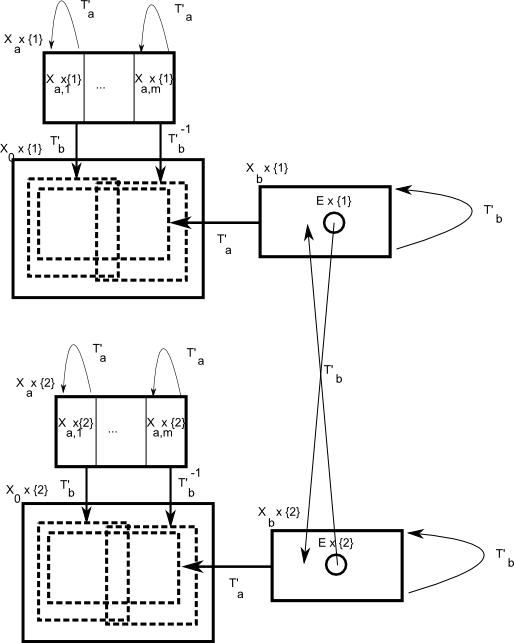}
\caption{The good system $(X',{\mathcal X}',\mu',(T'_g)_{g \in F_2})$, which is formed by gluing together two barely interacting copies of $(X,{\mathcal X},\mu,(T_g)_{g \in F_2})$.}
\label{fig:dupl}
\end{figure}

\begin{proposition}[Good system]  If $\kappa$ is sufficiently small, then $(X',{\mathcal X}',\mu',(T'_g)_{g \in F_2})$ is a good system with $\mu'(X') = 2$.
\end{proposition}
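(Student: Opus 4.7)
The plan is to check the four axioms (i)--(iv) of a good system for $(X', \mathcal{X}', \mu', (T'_g))$. The first three transfer essentially mechanically from the corresponding axioms for $X$, while (iv) is the only point where the coupling through $E$ really enters.

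For Axiom (i) the product structure gives $\mu'(X') = 2\mu(X) = 2$ and preserves the ratio $\mu(X_a) : \mu(X_b) : \mu(X_0) = 1 : 1 : 2$; the measurable-subset property of $X'_b$ is inherited from that of $X_b$ by distributing any prescribed mass in $[0, 1/2]$ between the two sheets. For Axiom (ii), the decisive observation is that $E \subset X_b$ is disjoint from $X_a$, so the twist defining $T'_b$ is invisible to $X'_a$: one has $T'_a X'_a = T_a X_a \times \{1,2\} = X'_a$, $T'_b X'_b = T_b X_b \times \{1,2\} = X'_b$, and the inclusion $T'_a X'_b \subset T'_b X'_a \cup (T'_b)^{-1} X'_a \subset X'_0$ is the corresponding inclusion from Axiom (ii) for $X$ crossed with $\{1,2\}$. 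For Axiom (iii), I take the partition $X'_a = \bigsqcup_{i=1}^m \bigsqcup_{j=1}^2 X_{a,i} \times \{j\}$ into $2m$ components; each is $T'_a$-invariant because $T'_a$ acts as $T_a \times \mathrm{id}$ on $X'_a$, and $(T'_a)^2 = T_a^2 \times \mathrm{id}$ on $X_{a,i} \times \{j\}$ is ergodic by the ergodicity of $T_a^2$ on $X_{a,i}$.

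The main obstacle is Axiom (iv). Fix $(i,j)$, let $R \subset X'$ be the $F_2$-orbit of $X_{a,i} \times \{j\}$ under $T'$, and write $R_k := \{x \in X : (x,k) \in R\}$ for $k \in \{1,2\}$. Since $\pi \colon X' \to X$ intertwines $T'_g$ with $T_g$ and is a bijection on every two-point fibre $\pi^{-1}(x)$, both $R_1 \cup R_2 = \pi(R)$ and $C := R_1 \cap R_2$ are $F_2$-invariant subsets of $X$. The union contains $X_{a,i}$ and therefore equals $X$ up to null sets by Axiom (iv) for $X$. Lemma \ref{pet-good} tells us that $X$ is $F_2^2$-ergodic, which implies $F_2$-ergodicity, so $C$ has measure either $0$ or $\mu(X) = 1$; in the latter case $R_1 = R_2 = X$ a.e.\ and hence $R = X'$ a.e., which is Axiom (iv) for $X'$.

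The hard step is therefore ruling out $\mu(C) = 0$. If $C$ were null, then $R$ would be a measurable graph $\{(x, \phi(x)) : x \in X\}$ for a $\{1,2\}$-valued function $\phi$ obeying $\phi \circ T_a = \phi$, $\phi \circ T_b = \phi$ off $E$, and $\phi \circ T_b = 3 - \phi$ on $E$; equivalently, $\psi := 2\phi - 3$ would be a $\{\pm 1\}$-valued $T_a$-invariant function with $\psi \circ T_b = (1 - 2\chi_E)\,\psi$, that is, a nontrivial $\mathbb{Z}/2$-cocycle for the $F_2$-action on $X$. To obtain a contradiction for $\kappa$ sufficiently small, I plan to use the positive measure of $E \subset X_b$ together with the richness of the $F_2$-orbit structure of $X$—the $T_a^2$-ergodicity on $X_{a,i}$, the invariance $T_b X_b = X_b$, and the ability to choose $E$ meeting a positive-measure stabiliser set in $X$—to exhibit a positive-measure set of $x \in X$ and an element $g \in F_2$ with $T_g x = x$ whose reduced word involves an odd number of $b$-steps through $E$. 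The cocycle relation applied along this loop then forces $\psi(x) = -\psi(x)$ on a positive-measure set, which is impossible, so $\mu(C) > 0$ and hence $C$ is full by ergodicity. This closes Axiom (iv) and the proof.
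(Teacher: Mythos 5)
Your handling of Axioms (i)--(iii) matches the paper's (the paper only says these are ``easily verified,'' but your verification is the intended one). Your reformulation of Axiom (iv) in terms of the orbit $R$, its fibre sets $R_1, R_2$, and the $F_2$-invariance of both $R_1 \cup R_2$ and $C = R_1 \cap R_2$ is correct, as is the reduction (via $F_2^2$-ergodicity of $X$ from Lemma~\ref{pet-good}) to ruling out that $R$ is the graph of a measurable $\phi \colon X \to \{1,2\}$.

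The problem is the ``hard step.'' Your plan is to find a positive-measure set of $x$ and a $g \in F_2$ with $T_g x = x$ whose reduced word makes an odd number of $b$-crossings of $E$. There is no reason such periodic points should exist: a good system need not have any. In the initial construction of Section 3, for instance, $T_a$ restricted to $X_a$ is an irrational rotation (no periodic points at all), and elsewhere the dynamics resemble the shift on a tree. More fundamentally, the paper's actual argument is not a search for loops; it shows directly that the orbit $Y$ of $X_{a,i} \times \{j\}$ must meet the other sheet. Concretely: choose $g$ of \emph{minimal} word length with $\mu(T_g X_{a,i} \cap E) > 0$, so the lifted dynamics of $T'_g$ agree with the trivial lift up to that point and $T'_g(X_{a,i}\times\{j\})$ hits $E \times \{j\}$; then $T'_{bg}$ lands in $X_b \times \{3-j\}$, and by Axiom (ii) one of $T'_{b^{-1}abg}$, $T'_{babg}$ lands in $X_a \times \{3-j\}$ with positive measure, hence (by $(T'_a)^2$-ergodicity) $Y \supseteq X_{a,i'} \times \{3-j\}$ for some $i'$. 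Finally a fixed element $g_{i,i'}$ carries $X_{a,i'}$ back to $X_{a,i}$ with positive overlap, and since its word length and the overlap measure are bounded independently of $\kappa$, taking $\kappa$ small ensures the lifted dynamics of $T'_{g_{i,i'}}$ survive the coupling; this is precisely where the hypothesis ``$\kappa$ sufficiently small'' is used. Your sketch never explains where the smallness of $\kappa$ enters, which is a sign that the key mechanism is missing. In cocycle language, the correct argument shows $\psi$ is constant on each $X_{a,i}$ and then exhibits two explicit paths between $X_{a,i}$ and some $X_{a,i'}$ --- one forced to flip the sign, one (for $\kappa$ small) forced to preserve it --- which is a contradiction that does not require any point to return to itself.
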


\begin{proof}  Axioms (i) and (ii) are easily verified, so we focus on verifying Axioms (iii) and (iv).

By Axiom (iii) for $X$, $X_a$ is partitioned into finitely many $T_a$-invariant components $X_{a,1},\dots,X_{a,m}$ of positive measure, each of which is $T_a^2$-ergodic.  This induces a partition of $X'_a$  into the $2m$ components $X_{a,1} \times \{1\}, \dots, X_{a,m} \times \{1\}, X_{a,1} \times \{2\}, \dots, X_{a,m} \times \{2\}$, and each of these components are clearly $T_a^2$-ergodic.

Now we verify Axiom (iv).  We need to show that $X' = \bigcup_{g \in F_2} T'_g ( X_{a,i} \times \{j\} )$ up to null sets for each $i=1,\dots,m$ and $j=1,2$.  Denote the right-hand side by $Y$, thus $Y$ is $F_2$-invariant and contains $X_{a,i} \times \{j\}$.  On the other hand, by Axiom (iv) for $X$ and the pigeonhole principle, there exists $g \in F_2$ such that $T_g X_{a,i}$ intersects $E$ in a set of positive measure.  We may assume that the word length $|g|$ of $g$ is minimal among all $g$ with this property, thus $T_h X_{a,i} \cap E$ is null whenever $|h| < |g|$.  From this we see that $T'_g (X_{a,i} \times \{j\})$ intersects $E \times \{j\}$ in a set of positive measure (since the dynamics of $T'$ are just a trivial lift of the dynamics of $T$ outside of $E \times \{1,2\}$).  From construction of $T'_b$, this implies that
$T'_{bg} (X_{a,i} \times \{j\})$ intersects $T_b E \times \{3-j\} \subset X_b \times \{3-j\}$ in a set of positive measure, and hence by Axiom (ii) the union of $T'_{b^{-1} abg} (X_{a,i} \times \{j\})$ and $T'_{babg} (X_{a,i} \times \{j\})$ intersects $X_a \times \{3-j\}$ in a set of positive measure; in particular, $Y$ intersects $X_a \times \{3-j\}$ in a set of positive measure.  As $Y$ is $(T'_a)^2$-invariant, we conclude from Axiom (iii) that $Y$ contains $X_{a,i'} \times \{3-j\}$ up to null sets for some $i'=1,\dots,m$.

Next, by another appeal to Axiom (iv) and the pigeonhole principle, we can find $g_{i,i'} \in F_2$ such that $T_{g_{i,i'}} X_{a,i'}$ and $X_{a,i}$ intersect in a set of positive measure.  Note that as there are only $m$ choices for $i'$, the word length of $g_{i,i'}$ can be bounded above, and the measure of $T_{g_{i,i'}} X_{a,i'} \cap X_{a,i}$ bounded below, by quantities independent of $\kappa$.  Because of this, we see that if $\kappa$ (and hence $E$) is small enough, then $T'_{g_{i,i'}} (X_{a,i'} \times \{3-j\})$ and $X_{a,i} \times \{3-j\}$ also intersect in a set of positive measure; thus $Y$ must intersect $X_{a,i} \times \{3-j\}$ in a set of positive measure, and hence by the $T_a^2$-ergodicity of $X_{a,i}$, $Y$ contains $X_{a,i} \times \{3-j\}$ up to null sets.  Since $Y$ already contained $X_{a,i} \times \{j\}$, we thus have $X_{a,i} \times \{1,2\}$ contained in $Y$ up to null sets.

Now for any $(x,j') \in X'$, we have from Axiom (iv) that $x = T_g y$ for some $y \in X_{a,i}$ and $g \in F_2$.  This implies that $(x,j') = T'_g (y,j'')$ for some $j'' \in \{1,2\}$, and hence $(x,j') \in Y$ for almost every $(x,j') \in X$, which gives Axiom (iv) for $X'$ as required.
\end{proof}

We let $M$ be a large natural number, depending on all previous quantities (in particular, depending on $\kappa$), to be chosen later.  The functions $\tilde f'_n \in L^1(\tilde X')$ will be defined for negative $n$ by the formulae
$$ \tilde f'_n(x,1,s) := \tilde f_{n}(x,s)$$
and
$$ \tilde f'_n(x,2,s) := \left(1 - \frac{\alpha}{2}\right) \tilde f_{n-2M}(x,s)$$
for any $x \in X$ and $s \in \{a,b,a^{-1},b^{-1}\}$.  Informally, $\tilde f'_n$ is two copies of $\tilde f'_n$, one over $X \times \{1\}$ and one over $X \times \{2\}$, with the latter experiencing a significant time delay and also a slight reduction in amplitude; the point is that we can delay the $X \times \{2\}$ dynamics until the dynamics of $X \times \{1\}$ has mixed almost completely, so that half of the mass of the $X \times \{1\}$ component is spread out almost uniformly over $X \times \{2\}$, allowing for the crucial amplitude reduction for the $X \times \{2\}$ component.  The idea behind this construction is due to Ornstein \cite[Lemma 4]{ornstein}.

Clearly, Axiom (vii') is a consequence of Axiom (vii) (we allow the constant $A'$ to depend on $M$).  For functions supported on $\tilde X'_0$, the Markov operator $P'$ is a trivial lift of the Markov operator $P$, so (from Axiom (vii')) one sees that $\tilde f'_{n+1} = P' \tilde f'_n$ for all $n \leq -2$.  We now define $\tilde f'_n$ for non-negative $n$ by setting
$$ \tilde f'_n := (P')^{n+1} \tilde f'_{-1},$$
so that Axiom (v') holds.  Clearly the $\tilde f'_n$ are non-negative and in $L^\infty$, and direct calculation shows that Axiom (vi') holds for all negative $n$, and hence for all $n$ thanks to Axiom (v').

The only remaining task is to show Axiom (viii').  By the union bound, it suffices to show the bounds on $X \times \{1\}$ and $X \times \{2\}$ separately.  More precisely, we establish the following two propositions.

\begin{proposition}\label{lima}  If $\kappa$ is sufficiently small (depending on $\eps, N$, and the $\tilde f_n$, but without any dependence on $M$), we have
$$ \sup_{n \in \Z} \pi_* \tilde f'_{2n}(x,1) \geq 1-\eps$$
for all $x \in X$ outside of a set of measure at most $\eps$.
\end{proposition}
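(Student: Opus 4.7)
The plan is to argue that the introduction of the coupling (governed by $\kappa$) only perturbs the dynamics on the first copy by a $\kappa$-controlled amount over any bounded time window, and then to exploit the lower bound already available for the uncoupled system from \eqref{proj}. More precisely, it will suffice to establish that for each $-N \leq n \leq N$ one has an $L^1$-estimate of the form
$$\|\pi_*[\tilde f'_{2n}(\cdot,1,\cdot)] - \pi_* \tilde f_{2n}\|_{L^1(X)} \leq C_1 N \|\tilde f_{-1}\|_{L^\infty(\tilde X)} \kappa,$$
with a constant $C_1$ independent of $M$. Markov's inequality for each $n$, a union bound over the $2N+1$ values of $n$, and \eqref{proj} then combine to give $\sup_{-N \leq n \leq N} \pi_* \tilde f'_{2n}(x,1) \geq 1-\eps$ outside a set of measure at most $\eps$, provided $\kappa$ is chosen small enough in terms of $N$, $\eps$, and $\|\tilde f_{-1}\|_{L^\infty(\tilde X)}$.

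To produce this estimate I would introduce the auxiliary \emph{decoupled} Markov operator $Q$ on $\tilde X'$ defined by
$$Q \tilde g(x,i,s) := \frac{1}{3} \sum_{s' \neq s^{-1}} \tilde g(T_s^{-1} x, i, s'),$$
which ignores the coupling on $E$ and hence acts on each copy of $\tilde X$ exactly as $P$ does on $\tilde X$. A routine induction then gives $Q^k \tilde f'_{-1}(x,1,s) = \tilde f_{k-1}(x,s)$ and $Q^k \tilde f'_{-1}(x,2,s) = (1-\frac{\alpha}{2}) \tilde f_{k-1-2M}(x,s)$. The telescoping identity $(P')^{n+1} - Q^{n+1} = \sum_{k=0}^{n} (P')^{n-k}(P'-Q)Q^k$ together with the $L^1$-contractivity of the Markov operator $P'$ then gives
$$\|(P')^{n+1}\tilde f'_{-1} - Q^{n+1}\tilde f'_{-1}\|_{L^1(\tilde X')} \leq \sum_{k=0}^{n} \|(P'-Q)Q^k \tilde f'_{-1}\|_{L^1(\tilde X')},$$
and since $Q^{n+1} \tilde f'_{-1}$ agrees with $\tilde f_n$ on the first copy, this controls the $L^1$-deviation of $\tilde f'_n(\cdot,1,\cdot)$ from $\tilde f_n$ for $0 \leq n \leq 2N$; for negative $n$ there is nothing to compare since $\tilde f'_n(\cdot,1,\cdot) = \tilde f_n$ by definition.

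The real content is bounding each summand uniformly in $M$. A direct inspection of the definitions of $T'_a, T'_b$ shows that $P'-Q$ is supported on the coupling region $\{(x,i,s) : s \in \{b,b^{-1}\},\ x \in T_b E \cup E\}$, which has total measure $O(\kappa)$ in $\tilde X'$, and at each such point $(P'-Q)g$ is bounded pointwise by a sum of values of $g(y,j,s')$ with $y \in E \cup T_b E \subset X_b$. The crucial input is now Axiom (vii), which keeps $\tilde f_m$ supported in $\tilde X_0$ for all negative $m$: provided we choose $M \geq N$, the exponent $k-1-2M$ is negative for every $k \in \{0,\dots,2N\}$, so the copy-$2$ values $(1-\frac{\alpha}{2})\tilde f_{k-1-2M}(y,s')$ vanish identically on $X_b \supset E \cup T_b E$; similarly the copy-$1$ value $\tilde f_{k-1}(y,s')$ vanishes on $X_b$ at $k=0$. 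For $1 \leq k \leq 2N$ the surviving copy-$1$ integrals are bounded by $\|\tilde f_{k-1}\|_{L^\infty(\tilde X)} \cdot \mu(E) \leq \|\tilde f_{-1}\|_{L^\infty(\tilde X)} \cdot \kappa$ (using that $P$ is a contraction on $L^\infty$), so summing over $k$ and invoking the telescoping bound above yields the claimed estimate uniformly in $M$.

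The main obstacle is precisely obtaining this $M$-uniformity: the time-delayed copy-$2$ function becomes increasingly concentrated as $M \to \infty$, its $L^\infty$-norm diverging, so a naive perturbation bound on $(P' - Q)$ would blow up. The resolution, which dictates the design of the construction, is that during the window $[-N,N]$ the ancient chain on copy~$2$ has not yet escaped the interior region $\tilde X_0$, while the coupling set $E$ is deliberately placed in the boundary region $X_b$ disjoint from $X_0$; thus the time-delay is ``invisible from the boundary'' throughout this window. This is precisely the mechanism that adapts Ornstein's trick to the free-group setting. Once the $L^1$-bound is in hand, applying $\pi_*$ (which does not increase the $L^1$-norm), Markov's inequality pointwise for each $n$, and the union bound described in the first paragraph yields the desired pointwise lower bound.
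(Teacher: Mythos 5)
Your proposal is correct, and its overall strategy (perturbation estimate in $L^1$ plus Markov's inequality, union bound over $n \in [-N,N]$, and \eqref{proj}) is the same as the paper's.  There is, however, one technically interesting difference in how the $L^1$ estimate is obtained.  The paper tracks the \emph{one-sided} deficit $\int_{\tilde X} \max\bigl(\tilde f_n(x,s) - \tilde f'_n(x,1,s), 0\bigr)\,d\tilde\mu$, and the point of choosing the one-sided quantity is precisely that when the induction step crosses the coupling region, the contribution from copy $2$ enters with a favourable sign (since $\tilde f'_n(\cdot,2,\cdot) \geq 0$), so it can simply be discarded; the deficit then grows by at most $O(B\kappa)$ per step with $B = \sup_n \|\tilde f_n\|_{L^\infty}$, and the potentially $M$-dependent $L^\infty$ blow-up of the delayed copy never appears.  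Your version instead bounds the two-sided quantity $\|\tilde f'_n(\cdot,1,\cdot) - \tilde f_n\|_{L^1}$ via the decoupled operator $Q$, the telescoping identity, and $L^1$-contractivity.  This is a cleaner and more explicit rendering of the paper's ``routine induction'', but as you correctly observe it forces you to confront the copy-$2$ values at points of $X_b$, which you neutralize by invoking Axiom (vii) together with the constraint $M \geq N$ (so that $k-1-2M < 0$ for all $k \leq 2N$ and hence $\tilde f_{k-1-2M}$ vanishes on $X_b \subset X \setminus X_0$).  Both routes are valid; the paper's one-sided trick avoids needing any condition relating $M$ to $N$ at this step, while your route makes the role of the support condition and the ``boundary invisibility'' of the time delay more explicit and arguably more illuminating.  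A minor harmless slip: the support of $P' - Q$ is more precisely $\{(x,i,s) : s=b,\ x \in T_bE\} \cup \{(x,i,s) : s = b^{-1},\ x \in E\}$, rather than ``$s \in \{b,b^{-1}\}$ and $x \in E \cup T_b E$'' indiscriminately, but this does not affect the argument.
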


\begin{proof}
By construction, we have
$$ \tilde f'_n(x,1,s) = \tilde f_n(x,s) $$
for negative $n$, all $x \in X$, and $s \in \{a,b,a^{-1},b^{-1}\}$.  Now we turn to non-negative $n$.  Note that as $P$ is a contraction on $L^\infty$, the $\tilde f_n$ for non-negative $n$ are uniformly bounded in $L^\infty$ by some quantity $B$ independent of $\kappa$.  A routine induction then shows that
$$ \int_{\tilde X} \max( \tilde f_n(x,s) - \tilde f'_n(x,1,s), 0 )\ d\tilde \mu(x,s) \leq C_{B,n} \kappa$$
for all non-negative $n$ and some quantity $C_{B,n}$ that depends on $B,n$ but not on $\kappa$; this is basically because on $X \times \{1\} \times \{a,b,a^{-1},b^{-1}\}$, the Markov process associated to $P'$ only differs from that associated to $P$ on the set $E \times \{1\} \times \{b\} \cup T_b E \times \{1\} \cup \{b^{-1}\}$, which has measure $\kappa/2$.  Applying $\pi_*$ and then the triangle inequality, we conclude that
$$
 \int_{X} \max\left( \sup_{-N \leq n \leq N} \pi_* \tilde f_{2n}(x) - \sup_{-N \leq n \leq N} \pi_* \tilde f'_{2n}(x,1), 0 \right)\ d\mu(x) \leq C'_{B,N} \kappa $$
for some $C'_{B,N}$ independent of $\kappa$; in particular, from Markov's inequality we see (for $\kappa$ small enough) that
$$ \sup_{-N \leq n \leq N} \pi_* \tilde f_{2n}(x) - \sup_{-N \leq n \leq N} \pi_* \tilde f'_{2n}(x,1) \leq \eps/3$$
for all $x \in X$ outside of a set of measure at most $\eps/3$.  
Combining this with \eqref{proj}, we obtain the claim.
\end{proof}

\begin{proposition}  If $\kappa$ is sufficiently small (depending on $\eps, N$, and the $\tilde f_n$, but without any dependence on $M$), we have
$$ \sup_{n \in \Z} \pi_* \tilde f'_{2n}(x,2) \geq 1-\eps$$
for all $x \in X$ outside of a set of measure at most $\eps$.
\end{proposition}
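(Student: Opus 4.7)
The approach is to split $\tilde f'_n$ linearly as $\tilde g_n + \tilde h_n$, where $\tilde g_n$ evolves the initial data concentrated in copy 1 ($\tilde g_{-1}(x,1,s) = \tilde f_{-1}(x,s)$, $\tilde g_{-1}(x,2,s)=0$) and $\tilde h_n$ evolves the initial data in copy 2 ($\tilde h_{-1}(x,2,s) = (1-\alpha/2)\tilde f_{-1-2M}(x,s)$, $\tilde h_{-1}(x,1,s)=0$). Both extend via $\tilde g_{n+1}=P'\tilde g_n$ and $\tilde h_{n+1}=P'\tilde h_n$, and $\tilde f'_n = \tilde g_n + \tilde h_n$ for all $n \in \Z$ by linearity. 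On $X\times\{2\}$ I aim to show that $\pi_*\tilde g_{2n}(\cdot,2)\approx \alpha/2$ (copy 1 mixes to density $\alpha/2$ over the two copies of $X$) and $\pi_*\tilde h_{2n}(\cdot,2)\approx(1-\alpha/2)\pi_*\tilde f_{2(n-M)}$ (copy 2 evolves essentially as an undisturbed copy of the original chain, time-shifted by $-2M$), and then combine at $n=M+n_0$ with $n_0\in[-N,N]$ chosen via the analogue of \eqref{proj} to make $\pi_*\tilde f_{2n_0}$ close to $1$.

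The $\tilde h_n$-estimate is the heart of Ornstein's time-delay. The coupling between the two copies in the definition of $T'_b$ occurs only at $E$ and $T_b E$, both of which lie in $X_b$ and are therefore disjoint from $X_0$; meanwhile Axiom (vii) places $\tilde f_k$ inside $\tilde X_0$ for every $k < 0$. A direct induction then shows that for $-1 \leq n \leq 2M$ the computation of $P'\tilde h_{n-1}$ picks up no ``flip'' contribution, so $\tilde h_n$ coincides exactly with the ideal lift $\tilde h^{\text{ideal}}_n(x,i,s) := \mathbf{1}_{i=2}(1-\alpha/2)\tilde f_{n-2M}(x,s)$. For $2M < n \leq 2(M+N)$, flips begin to inject error, but a pointwise comparison between $P'$ and its trivial ``no-flip'' lift $P''$ of $P$ bounds the per-step $L^1(\tilde X')$ error by $C\kappa \|\tilde h_n\|_{L^\infty}$; and once past $n = 2M$, this $L^\infty$ norm is bounded by $B := (1-\alpha/2)\max_{0 \leq k \leq 2N}\|\tilde f_k\|_\infty$, an $M$-independent constant. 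Telescoping over at most $2N$ steps yields $\|\tilde h_{2(M+n_0)} - \tilde h^{\text{ideal}}_{2(M+n_0)}\|_{L^1(\tilde X')} \leq 2NCB\kappa$, and Markov's inequality then gives $\pi_*\tilde h_{2(M+n_0)}(x,2) \geq (1-\alpha/2)\pi_*\tilde f_{2n_0}(x) - \eps/4$ off a set of $\mu$-measure $\leq \eps/4$, once $\kappa$ has been chosen sufficiently small (depending only on $\eps, N, B, \alpha$).

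For $\tilde g_n$, the good system $(X',\ldots)$ has $\mu'(X')=2$, so by Lemma~\ref{pet-good} applied to $\tilde g_0 \in L^\infty(\tilde X')$, the iterates $(P')^{2k}\tilde g_0$ converge pointwise a.e.\ and in $L^1$ to $\alpha/2$. \emph{After} $\kappa$ has been fixed, Egorov's theorem lets me choose $M$ large (depending on $\kappa,\eps,N$, and the mixing behaviour of $P'$) so that $|\tilde g_{2(M+n_0)} - \alpha/2| < \eps/16$ off a set of $\tilde\mu'$-measure $\leq \eps/16$ uniformly in $n_0 \in [-N,N]$. Projecting to $X$ gives $\pi_*\tilde g_{2(M+n_0)}(x,2) \geq \alpha/2 - \eps/16$ off a set of $\mu$-measure $\leq \eps/4$.

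Union-bounding the three bad sets (from \eqref{proj}, from the $\tilde g$-estimate, and from the $\tilde h$-estimate, with total $\mu$-measure $\leq \eps/3 + \eps/4 + \eps/4 < \eps$), and choosing $n_0 \in [-N,N]$ to maximize $\pi_*\tilde f_{2n_0}(x)$ (which gives $\geq 1 - \eps/3$), one obtains
\[
\sup_{n \in \Z} \pi_* \tilde f'_{2n}(x,2) \;\geq\; \tfrac{\alpha}{2} - \tfrac{\eps}{16} + \left(1-\tfrac{\alpha}{2}\right)\left(1-\tfrac{\eps}{3}\right) - \tfrac{\eps}{4} \;\geq\; 1 - \eps.
\]
The main obstacle is the $\tilde h_n$-estimate: controlling the coupling error by $O(\kappa)$ \emph{uniformly in $M$} requires both that Axiom (vii) confines the negative-time functions $\tilde f_k$ to $\tilde X_0$ (so no flip occurs during the delay window $[-1, 2M]$) and that the post-delay $L^\infty$ norms $\|\tilde f_k\|_\infty$ for $k \in [0, 2N]$ are bounded by an $M$-independent constant $B$, so that $\kappa$ can be fixed first and $M$ then taken arbitrarily large.
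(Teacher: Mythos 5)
Your proof is correct and follows essentially the same approach as the paper's: the split $\tilde f'_n = \tilde g_n + \tilde h_n$ is exactly the paper's $\tilde f'_{n,1} + \tilde f'_{n,2}$, the copy-1 part is handled via Lemma~\ref{pet-good} and Egorov just as in \eqref{add}, and the copy-2 part is controlled by exact agreement with the time-shifted ideal lift during the delay window (via Axiom (vii)) followed by an $O(\kappa)$ perturbation estimate over the remaining $O(N)$ steps with an $M$-independent $L^\infty$ bound, as in the paper's appeal to ``repeating the arguments used to prove Proposition~\ref{lima}.'' The order of quantifiers (choose $\kappa$ first, independent of $M$, then take $M$ large) and the closing arithmetic also match the paper.
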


\begin{proof}  We split
$$ \tilde f'_n = \tilde f'_{n,1} + \tilde f'_{n,2}$$
where for negative $n$, $\tilde f'_{n,i}$ is the restriction of $\tilde f'_n$ to $X \times \{i\} \times \{a,b,a^{-1},b^{-1}\}$, and for non-negative $n$, $\tilde f'_{n,i}$ is propagated by $P'$:
$$ \tilde f'_{n,i} := (P')^{n+1} \tilde f'_{-1,i}.$$
Observe that the $\tilde f'_{n,1}$ component of $\tilde f'_n$ does not depend on $M$.

From Lemma \ref{pet-good}, we see that $\tilde f'_{n,1}$ converges pointwise almost everywhere as $n \to \infty$ to the constant
$$ \frac{1}{\mu(\tilde X')} \int_{\tilde X'} \tilde f'_{-1,1}\ d\tilde \mu' = \frac{1}{2} \int_X \tilde f_{-1}\ d\tilde \mu = \frac{\alpha}{2}.$$
In particular, $\pi_* \tilde f'_{n,1}$ converges pointwise almost everywhere to the same constant.
Thus, by Egorov's theorem, and assuming $M$ sufficiently large (depending on previous quantities such as $\eps, \kappa$, and the $\tilde f_n$, but without any circular dependency of $M$ on itself) we have
\begin{equation}\label{add}
\inf_{n \geq 2M-2N} \pi_* \tilde f'_{n,1}(x,2) \geq \frac{\alpha}{2} - \frac{\eps}{3}
\end{equation}
for all $x \in X$ outside of a set of measure at most $\eps/3$.

Now we work on $\tilde f'_{n,2}$.  For all $n < 2M$, an induction (using Axiom (vii)) shows that $\tilde f'_{n,2}$ is supported on $X_0 \times \{2\} \times \{a,b,a^{-1},b^{-1}\}$, and that
$$ \tilde f'_{n,2}(x,2,s) = \left(1 - \frac{\alpha}{2}\right) \tilde f_{n-2M}(x,s)$$
for all $x \in X$ and $s \in \{a,b,a^{-1},b^{-1}\}$.  Repeating the arguments used to prove Proposition \ref{lima}, we see (if $\kappa$ is sufficiently small depending on $\eps,N$, but (crucially) without any dependence on $M$) that 
$$ \left(1 - \frac{\alpha}{2}\right) \sup_{M-N \leq n \leq M+N} \pi_* \tilde f_{2n-2M}(x)
-\sup_{M-N \leq n \leq M+N} \pi_* \tilde f'_{2n}(x,2) \leq \eps/3
$$
for all $x \in X$ outside of a set of measure at most $\eps/3$.
Combining this with \eqref{add}, we see that
$$
\sup_n \pi_* \tilde f'_{n}(x,2) \geq \frac{\alpha}{2} + (1 - \frac{\alpha}{2}) \sup_{-N \leq n \leq N} \pi_* \tilde f_{2n}(x) - \frac{2\eps}{3} $$
for all $x \in X$ outside of a set of measure at most $2\eps/3$.  Applying \eqref{proj}, we then obtain the claim.
\end{proof}

The proof of Theorem \ref{iterate}, and thus Theorem \ref{main}, is now complete.

\end{document}